\documentclass[11pt]{article}

\usepackage[T1]{fontenc}
\usepackage[utf8]{inputenc}
\usepackage{lmodern}
\usepackage[margin=1in]{geometry}
\usepackage{amsmath,amssymb,amsthm,mathrsfs}
\usepackage{graphicx}
\usepackage{float}
\usepackage[colorlinks=true,allcolors=blue]{hyperref}

\newtheorem{Theorem}{Theorem}[section]
\newtheorem{Lemma}[Theorem]{Lemma}
\newtheorem{Remark}[Theorem]{Remark}

\title{Asymptotic formulas for products of Poisson distributions}
\author{Džiugas Chvoinikov and Jonas Šiaulys}
\date{}

\begin{document}
\maketitle

\begin{abstract}
In this paper, we study the asymptotic behaviour of the product tail probability $
 \mathbb{P}(\xi_1\cdots\xi_N \geqslant n),
$
where $\{\xi_1,\ldots,\xi_N\}$ is a finite collection of independent Poisson random variables with positive parameters $\lambda_1,\ldots,\lambda_N$.
We derive a refined Laplace-type asymptotic formula for the tail probability, based on
Stirling's logarithmic approximation, a constrained saddle-point method,
the Lambert function, and a careful evaluation of the constrained
Gaussian prefactor. This yields an explicit approximation with an $O(\log n)$ remainder term in the exponent.
\end{abstract}
\noindent\textbf{Keywords:} Poisson distribution; product; asymptotic formula; saddle-point method; Lambert function.

\section{Introduction}

Random phenomena that occur as discrete events in time or space arise in many areas of science, engineering, and applied mathematics. Examples include customer arrivals in a queue, radioactive decay events, network packet arrivals, and the spatial distribution of particles or organisms. A fundamental probabilistic model used to describe such phenomena is the Poisson distribution, which provides a mathematical framework for modeling the number of events that occur in a fixed interval when they are independent and occur at a constant average rate.

$\bullet$\textit{ We say that a random variable (r.v.) $\xi$  defined in a probability space $(\Omega, \mathcal{F},\mathbb{P})$ is distributed according to the Poisson law if
$$
\mathbb{P}(\xi=k)=\frac{\lambda^k}{k!}{\rm e}^{-\lambda},\ k\in\{0,1,2\ldots \}\, ,
$$
where $\lambda$ is a positive parameter.}

It is well known that for the r.v. $\xi$ distributed according to the Poisson law with parameter $\lambda$ we have \cite{h-1967}:
\begin{align*}
  &\mathbb{E}\xi =\lambda,\\ &\mathbb{V}{\rm ar}\,\xi=\mathbb{E}\xi^2-\big(\mathbb{E}\xi\big)^2=\lambda,\\
  &\mathbb{E}\left(\prod_{k=0}^{n-1}(\xi-k)\right)=\lambda^n,\, n\in\mathbb{N},\\
  &\mathbb{E}{\rm e}^{z\xi}={\rm e}^{\lambda(z-1)},\, z\in\mathbb{C}.
\end{align*}

\subsection{Poisson law as a limiting distribution}

The Poisson distribution plays a central role in probability theory and stochastic processes because it naturally arises as a limit of the binomial distribution when the number of trials becomes large while the probability of success becomes small. 
More precisely, the following statement holds.
\begin{Theorem}
Let $\xi_n$ be a sequence of independent r.v.s distributed according to the Binomial law,i.e.
$$
\mathbb{P}(\xi_n=k)=\binom{n}{k}p_n^k(1-p_n)^{n-k},\ k\in\{0,1,\ldots, n\}
$$
for some parameters  $p_n\in(0,1)$. If 
$
np_n\mathop{\rightarrow}\limits_{n\rightarrow\infty}\lambda,
$ then 
$$
\mathbb{P}(\xi_n=k)\mathop{\rightarrow}\limits_{n\rightarrow\infty} \frac{\lambda^k}{k!}{\rm e }^{-\lambda}
$$
for any fixed $k\in\{0,1,\ldots\}$.
\end{Theorem}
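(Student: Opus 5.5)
We fix $k\in\{0,1,\ldots\}$ and work only with $n>k$, so that the binomial probability is given by the displayed formula. The plan is to factor $\mathbb{P}(\xi_n=k)$ into three pieces, each with an elementary limit, and then multiply the limits. Writing
$$
\mathbb{P}(\xi_n=k)=\frac{n(n-1)\cdots(n-k+1)}{n^k}\cdot\frac{(np_n)^k}{k!}\cdot(1-p_n)^{n}\cdot(1-p_n)^{-k},
$$
we treat each factor separately. The first factor equals $\prod_{j=0}^{k-1}(1-j/n)$, a product of a fixed number $k$ of terms each tending to $1$, so it tends to $1$. The second factor tends to $\lambda^k/k!$ because $np_n\to\lambda$ and $x\mapsto x^k$ is continuous.

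For the last two factors we first note that $p_n=(np_n)/n\to 0$, since the numerator is bounded and the denominator tends to infinity. Hence $(1-p_n)^{-k}\to 1$, because $k$ is fixed.

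The main step is to show that $(1-p_n)^n\to {\rm e}^{-\lambda}$. We write $(1-p_n)^n=\exp\big(n\log(1-p_n)\big)$ and use the expansion $\log(1-x)=-x+O(x^2)$, valid for $0<x\le 1/2$. More concretely, we use the bound $|\log(1-x)+x|\le x^2$ for $0\le x\le 1/2$. Since $p_n\to0$, for large $n$ this gives
$$
\big|n\log(1-p_n)+np_n\big|\le np_n\cdot p_n\to \lambda\cdot 0=0,
$$
and therefore $n\log(1-p_n)\to-\lambda$. Continuity of the exponential function then yields $(1-p_n)^n\to{\rm e}^{-\lambda}$.

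Multiplying the four limits gives $\mathbb{P}(\xi_n=k)\to \lambda^k{\rm e}^{-\lambda}/k!$. The only delicate point is the bound on the logarithm. It holds because the error term $\sum_{m\ge2}x^m/m$ is at most $x^2/(2(1-x))\le x^2$ for $x\le 1/2$. Note also that the limit $\lambda$ may be any positive number and only boundedness of $np_n$ is needed in that bound. The independence assumption on the $\xi_n$ plays no role, since the statement concerns only the marginal distributions.
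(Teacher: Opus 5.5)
Your proof is correct and complete. The factorization of $\binom{n}{k}p_n^k(1-p_n)^{n-k}$ is exact for $n>k$, and the bound $|\log(1-x)+x|\le x^2$ on $[0,1/2]$, combined with $p_n=(np_n)/n\to0$, gives $n\log(1-p_n)\to-\lambda$.

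The paper gives no proof of this theorem. It only remarks that it can be proved by characteristic functions and that it is a special case of the von Mises theorem: take $k_n=n$ and $p_{nk}=p_n$, so that condition (i) reads $p_n\to0$ and condition (ii) reads $np_n\to\lambda$. It then refers to the literature.

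The characteristic-function route has three steps:
\begin{itemize}
\item show that $\mathbb{E}{\rm e}^{it\xi_n}=\bigl(1+p_n({\rm e}^{it}-1)\bigr)^n\to\exp\{\lambda({\rm e}^{it}-1)\}$;
\item invoke L\'evy's continuity theorem to get convergence in distribution;
\item use that all the variables are integer-valued to pass from the distribution functions, evaluated at half-integers, to the individual point masses.
\end{itemize}

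Your direct computation on the probability mass function avoids both the continuity theorem and that final lattice step. The estimate you prove is essentially what the first step needs anyway, in its complex form $|\log(1+z)-z|\le|z|^2$ for $|z|\le 1/2$ with $z=p_n({\rm e}^{it}-1)$.

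What the characteristic-function (or generating-function) approach buys is generality. It works equally well for sums of non-identically distributed Bernoulli variables, as in the von Mises theorem. There no closed form for $\mathbb{P}(S_n=m)$ is available, and your factorization into elementary factors is not directly at hand.
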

This classical convergence fact can be proved by using characteristic functions and is usually called \textit{the law of small numbers}. In fact, this statement  
follows from the so-called \textit{the von Mises theorem} which has the following formulation.

\begin{Theorem}\label{du}
Let $\{\xi_{n1},\xi_{n2},\ldots,\xi_{nk_n}\}_{n=1}^\infty$ be a sequence of series of independent Bernoulli  r.v.s such that 
$$
\mathbb{P}(\xi_{nk}=1)=p_{nk}, \ \mathbb{P}(\xi_{nk}=0)=1-p_{nk},\ p_{nk}\in(0,1),
$$
for $n\in\mathbb{N}$ and $k\in\{1,2,\ldots,k_n\}$. Denote
$$
S_n=\sum_{k=1}^{k_n}\xi_{nk}.
$$
Then
$$
\mathbb{P}(S_n=m)\mathop{\rightarrow}\limits_{n\rightarrow\infty}\frac{\lambda^m}{m!}{\rm e}
^{-\lambda}$$
for any fixed $m\in\{0,1,\ldots\}$ if and only if 
\begin{align*}
{\rm (i)} & \max_{1\leqslant k\leqslant k_n}p_{nk}\mathop{\rightarrow}\limits_{n\rightarrow\infty}0,\\
{\rm (ii)}& \sum_{k=1}^{k_n} p_{nk}\mathop{\rightarrow}_{n\rightarrow\infty}\lambda.
\end{align*}
\end{Theorem}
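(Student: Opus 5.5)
We prove both directions of Theorem \ref{du} with probability generating functions. Put $G_n(z)=\mathbb{E}z^{S_n}=\prod_{k=1}^{k_n}(1+p_{nk}(z-1))$. For integer-valued r.v.s, $\mathbb{P}(S_n=m)\to \frac{\lambda^m}{m!}{\rm e}^{-\lambda}$ for every $m$ holds if and only if $G_n(z)\to {\rm e}^{\lambda(z-1)}$ for every $z\in[0,1]$. The forward implication follows by dominated convergence, since $\sum_m|\mathbb{P}(S_n=m)-\pi_m|\to0$ by Scheffé's lemma. The converse is the continuity theorem for generating functions, which can also be reached through characteristic functions as the paper indicates. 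Everything therefore reduces to the behaviour of $\log G_n(z)=\sum_k\log(1-p_{nk}(1-z))$.

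\emph{Sufficiency.} Assume (i) and (ii) and fix $z\in[0,1]$. Write $x_k=p_{nk}(1-z)\in[0,1)$. We use $-x-x^2\le\log(1-x)\le -x$ for $0\le x\le 1/2$. By (i), $\max_k x_k\le 1/2$ for large $n$, so
\[
\Big|\log G_n(z)+(1-z)\sum_k p_{nk}\Big|\le \sum_k p_{nk}^2\le \max_k p_{nk}\sum_k p_{nk}\to 0 .
\]
Here the last step uses (i) together with the boundedness of $\sum_kp_{nk}$ from (ii). Hence $\log G_n(z)\to-\lambda(1-z)$, and the reduction above finishes this direction.

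\emph{Necessity.} This is the main obstacle, because we must extract (i) from the limit law alone. First take $m=0$: $\prod_k(1-p_{nk})\to{\rm e}^{-\lambda}>0$. Each factor is at most $1$, so $1-\max_kp_{nk}\ge\prod_k(1-p_{nk})$ is bounded below. Thus $p_{nk}\le c<1$ uniformly for large $n$. Also $\sum_kp_{nk}\le-\sum_k\log(1-p_{nk})$ stays bounded, say $\sum_kp_{nk}\le C$. To get (i), we compare the first two factorial moments of $S_n$ through the generating function on $z\in[0,1]$. Set $s_n=\sum_kp_{nk}$, $q_n=\sum_kp_{nk}^2$, and $y=1-z$. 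The uniform bounds $p_{nk}\le c$ and $s_n\le C$ give
\[
\log G_n(z)=-s_n y-\tfrac12 q_n y^2+R_n(y),\qquad |R_n(y)|\le K y^3\sum_k p_{nk}^3\le KC y^3 ,
\]
with $K$ depending only on $c$. This expansion is valid for $y\in[0,1]$. Now pass to a subsequence along which $s_n\to s$ and $q_n\to q$. The limit $-sy-\tfrac12qy^2+O(y^3)$ must equal $-\lambda y$. Comparing at small $y$, and using that the bound on the remainder is uniform in $n$, we get $s=\lambda$ and $q=0$. Since every subsequence yields the same limits, (ii) holds and $q_n=\sum_kp_{nk}^2\to0$. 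Finally $\max_kp_{nk}^2\le q_n$ gives (i).

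To compare coefficients cleanly, a limit of $-sy-\tfrac12 qy^2+O(y^3)$ is not quite enough. We will instead use the exact identity
\[
\log G_n(z)=-\sum_{j\ge1}\frac{y^j}{j}\sum_kp_{nk}^j .
\]
Each coefficient $\sum_kp_{nk}^j$ is nonnegative and at most $c^{j-1}C$, so the series converges uniformly for $y\in[0,1]$. By compactness, pass to a subsequence along which all coefficients converge. The limit is then a power series in $y$ equal to $-\lambda y$ on $[0,1]$. The identity theorem forces every coefficient with $j\ge2$ to vanish, in particular $q=0$.
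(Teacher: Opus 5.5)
Your proof is correct. The paper itself gives no proof of this theorem: it only cites the classical literature and remarks that such Poisson limits can be obtained with characteristic functions. Your argument is therefore a self-contained substitute in that transform-based spirit. Working with $G_n$ on $[0,1]$ rather than with characteristic functions on $\mathbb{R}$ has one advantage: every factor $1-p_{nk}(1-z)$ is positive, so only real logarithms and elementary inequalities are needed.

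Your self-correction in the necessity part is unnecessary. Because $|R_n(y)|\le KCy^3$ uniformly in $n$, the subsequential limit already gives $|(s-\lambda)y+\tfrac12 qy^2|\le KCy^3$ for all $y\in[0,1]$. Dividing by $y$, then by $y^2$, and letting $y\downarrow 0$ yields $s=\lambda$ and then $q=0$. Likewise, in the power-series version you do not need the identity theorem. Once the coefficient of $y$ is identified as $\lambda$, nonnegativity of the remaining coefficients forces them to vanish.

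A more economical necessity proof uses only $m=0,1$. You have $\mathbb{P}(S_n=1)/\mathbb{P}(S_n=0)=\sum_k p_{nk}/(1-p_{nk})\to\lambda$ and $-\log\mathbb{P}(S_n=0)=-\sum_k\log(1-p_{nk})\to\lambda$. Since $x/(1-x)+\log(1-x)\ge x^2/2$ on $[0,1)$, this gives $\sum_k p_{nk}^2\to 0$, hence (i). Then (ii) follows from $x\le-\log(1-x)\le x+x^2$ for small $x$. Your route uses more of the limiting law than necessary, but it treats both directions with the same generating-function expansion.
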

The proofs of the above theorems can be found in \cite{m-1921, f-1950, l-1960, hl-1960, m-1979}, for instance.
The presented limiting relationship explains why the Poisson distribution is widely used to approximate rare events.

It should be noted that condition (i) of Theorem \ref{du} shows the infinitesimality of the system $\{\xi_{n1},\xi_{n2},\ldots,\xi_{nk_n}\}_{n=1}^\infty$. Thus, Theorem \ref{du} shows that the limiting Poisson distribution is very "strong", because it affects the infinitesimality of the converging system. The following Theorem \ref{trys} shows that in probabilistic number theory, the Poisson distribution is even "stronger". It not only affects the infinitesimality of probabilities but also the difference between the distributions of additive functions and generated random variables.

\begin{Theorem}\label{trys}
  {\rm \cite{s-1992,s-1998}} Let $\big\{f_n(m)\big\}_{n\geqslant 2}$ be a sequence of strongly additive function such that $f_n(q)\in\{0,1\}$ for every prime number $q$, i.e. $$  f_n(m)=\sum_{\substack{q|m\\f_n(q)=1}}1.$$
  Then
  $$
  \frac{1}{n}\sum_{\substack{m\leqslant n\\f_n(m)=1}}1\mathop{\rightarrow}\limits_{n\rightarrow\infty}\frac{\lambda^l}{l!}{\rm e}^{-\lambda}
  $$
  for every fixed $l\in\{0,1,\ldots\}$ if and only if
  \begin{align*}
      \lim_{n\rightarrow\infty}\max_{\substack{q\leqslant n\\ f_n(q)=1}}\frac{1}{q}=0,\ \  \lim_{n\rightarrow\infty}\sum_{\substack{q\leqslant n\\f_n(q)=1}}\frac{1}{q}=\lambda,\ \  \lim_{n\rightarrow\infty}\frac{1}{\log n}\sum_{\substack{q\leqslant n\\ f_n(q)=1}}\frac{\log q}{q}=0.
  \end{align*}
\end{Theorem}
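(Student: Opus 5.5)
The natural route is to compare the frequencies on $\{m\leqslant n\}$ with the independent model of Theorem \ref{du}, Kubilius-style. I read the counted event as $f_n(m)=l$ (the displayed $f_n(m)=1$ looks like a misprint). Put $P_n=\{q\leqslant n:\ f_n(q)=1\}$. Then $f_n(m)=\sum_{q\in P_n}\mathbf 1_{q\mid m}$, and $\mathbf 1_{q\mid m}$ is a Bernoulli-type variable on $\{1,\ldots,n\}$ with frequency $\lfloor n/q\rfloor/n\approx 1/q$. The model is a family of independent Bernoulli r.v.s $\eta_q$, $q\in P_n$, with $\mathbb P(\eta_q=1)=1/q$. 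By Theorem \ref{du}, $\sum_q\eta_q$ is asymptotically Poisson$(\lambda)$ exactly under the first two conditions. So the work is to show that the third condition is what makes the arithmetic distribution track the model.

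\textbf{Sufficiency.} First use the third condition to discard large primes. For any $\varepsilon>0$,
$$\sum_{\substack{n^\varepsilon<q\leqslant n\\ q\in P_n}}\frac1q\leqslant \frac{1}{\varepsilon\log n}\sum_{q\in P_n}\frac{\log q}{q}\to0 .$$
Hence we can choose $\varepsilon_n\to0$ slowly so that this sum still tends to $0$ with $y=n^{\varepsilon_n}$. The number of $m\leqslant n$ divisible by some $q\in P_n$ with $q>y$ is at most $n\sum_{q>y,\,q\in P_n}1/q=o(n)$. Therefore $f_n$ agrees with the truncation $f_n^{(y)}(m)=\sum_{q\in P_n,\,q\leqslant y}\mathbf 1_{q\mid m}$ outside a set of density $o(1)$.

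Next apply Kubilius' fundamental lemma, i.e.\ the sieve in the form of a total-variation comparison. For primes $\leqslant y=n^{\varepsilon_n}$, the joint law of $(\mathbf 1_{q\mid m})_{q\leqslant y}$ under the uniform measure on $m\leqslant n$ lies within $O\big(\exp(-c\,\varepsilon_n^{-1}\log\varepsilon_n^{-1})+n^{-c}\big)=o(1)$ of the independent model. Consequently the law of $f_n^{(y)}$ is $o(1)$-close to that of $\sum_{q\leqslant y}\eta_q$. Removing primes in $(y,n]$ from the model costs $o(1)$ as well. Theorem \ref{du} then gives the Poisson$(\lambda)$ limit.

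\textbf{Necessity.} This is the main obstacle, because convergence in distribution gives no control of moments. I would work with the multiplicative function $z^{f_n(m)}$ for a fixed real $z\in(0,1)$. Its mean value $\frac1n\sum_{m\leqslant n}z^{f_n(m)}$ must converge to ${\rm e}^{\lambda(z-1)}$, since $z^{f_n}$ is bounded. On the other hand, a Halász/Hall--Tenenbaum type upper bound for mean values of nonnegative multiplicative functions bounded by $1$ gives
$$\frac1n\sum_{m\leqslant n}z^{f_n(m)}\leqslant \exp\Big(-(1-z)\sum_{q\in P_n}\frac1q\Big)\big(1+o(1)\big).$$
A matching lower bound comes from the truncation argument above. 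Together these should force $\sum_{q\in P_n}1/q\to\lambda$. In fact I expect to have to extract this with the large-prime part treated separately, via a Dickman-type deficit.

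For the third condition, I would argue by contradiction. If $\frac{1}{\log n}\sum_{q\in P_n}\frac{\log q}{q}\geqslant\delta$ along a subsequence, then primes in $(n^{\delta/2},n]$ carry reciprocal mass bounded below. Divisibility by such large primes is negatively correlated rather than independent, since at most $1/\delta$ of them can divide a given $m$. This should distort the generating function away from ${\rm e}^{\lambda(z-1)}$ by a definite amount, e.g.\ by comparing second factorial moments computed on the truncated range. Finally, $\max 1/q\to0$ follows because a fixed prime $q\in P_n$ would split off an independent Bernoulli$(1/q)$ summand. That summand is incompatible with a Poisson limit by the infinitesimality part of Theorem \ref{du} (Raikov's theorem on Poisson decomposition).
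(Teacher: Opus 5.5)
The paper gives no proof of this theorem (it is quoted from \v{S}iaulys's work), so your proposal has to stand on its own. You read the event correctly as $f_n(m)=l$. Your sufficiency half is sound: discarding $q>n^{\varepsilon_n}$ via the third condition, applying the Kubilius model on $q\le n^{\varepsilon_n}$, and then Theorem~\ref{du} is the standard argument.

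The necessity half is not a proof, and its concrete steps fail as stated. Write $\Lambda_n=\sum_{q\in P_n}1/q$.
(1) Halberstam--Richert or Hall--Tenenbaum type bounds give $\frac1n\sum_{m\le n}z^{f_n(m)}\ll {\rm e}^{-(1-z)\Lambda_n}$ only with an unspecified constant. The sharp form is false: at $z=0$, take $P_n=\{q\le n^{1/u}\}$ with $1<u<{\rm e}^{M}$, where $M$ is the Meissel--Mertens constant. Then the density of $\{f_n=0\}$ is $\sim 1/\log n$, whereas ${\rm e}^{-\Lambda_n}\sim u\,{\rm e}^{-M}/\log n$. With a constant, the bound yields only $\sup_n\Lambda_n<\infty$.
(2) The ``matching lower bound from truncation'' is circular, because your truncation used the third condition. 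Without that condition it is false: for $P_n=\{n^a<q\le n\}$ with $\tfrac12\le a<1$ one has $f_n\in\{0,1\}$ and $\frac1n\sum_{m\le n}z^{f_n(m)}\to 1-(1-z)\log\frac1a<a^{1-z}=\lim {\rm e}^{-(1-z)\Lambda_n}$. So you cannot extract $\Lambda_n\to\lambda$ this way before handling large primes, which is exactly the content of the third condition.
(3) Second factorial moments cannot detect large primes. If $P_n\subset(n^{1/4},n^{1/3}]$, every pair and triple from $P_n$ has product $\le n$, so the first three factorial moments of $f_n$ match the independent model up to $o(1)$. The defect appears only at order $k>1/\delta$.
(4) Your argument for the first condition needs asymptotic independence of $\mathbf 1_{q_0\mid m}$ from the rest of $f_n$. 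The Kubilius model supplies this only after large primes are excluded, so this step inherits the gap in (2)--(3).

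The missing idea is an a priori bound on $\Lambda_n$, followed by the method of factorial moments; this gives all three conditions, in the right order.
First, the Tur\'an--Kubilius inequality gives $\frac1n\sum_{m\le n}(f_n(m)-\Lambda_n)^2\ll\Lambda_n$. If $\Lambda_n\to\infty$ along a subsequence, then $\frac1n\#\{m\le n:f_n(m)=0\}\ll 1/\Lambda_n\to0$, contradicting the limit ${\rm e}^{-\lambda}$. Hence $\Lambda_n=O(1)$.
Next, $\frac1n\sum_{m\le n}\binom{f_n(m)}{k}=\frac1n\sum_{q_1<\dots<q_k,\ q_i\in P_n}\bigl\lfloor n/(q_1\cdots q_k)\bigr\rfloor\le\Lambda_n^k/k!$ is bounded for every $k$. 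By uniform integrability, every factorial moment converges to $\lambda^k/k!$.
The case $k=1$ gives $\Lambda_n\to\lambda$, since the mean of $f_n$ is $\Lambda_n+O(1/\log n)$.
The case $k=2$ gives $\sum_{q\in P_n}q^{-2}+2\sum_{q_1<q_2,\ q_1q_2>n}(q_1q_2)^{-1}=\Lambda_n^2-\frac1n\sum_{m\le n}f_n(m)\bigl(f_n(m)-1\bigr)+o(1)\to0$. Hence $\max_{q\in P_n}1/q\to0$, which is the first condition.
Finally, suppose $\sum_{q\in P_n,\,q>n^\delta}1/q\ge c>0$ along a subsequence, and take $k>1/\delta$. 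Every $k$-subset of $P_n\cap(n^\delta,n]$ has product $>n$ and contributes nothing to the $k$-th factorial moment. Since $1/q<n^{-\delta}$ on that range, its elementary symmetric sum is $\ge c^k/k!-o(1)$. This gives $\Lambda_n^k/k!-\frac1n\sum_{m\le n}\binom{f_n(m)}{k}\ge c^k/k!-o(1)$, a contradiction, because the left side tends to $0$.
Thus $\sum_{q\in P_n,\,q>n^\delta}1/q\to0$ for every $\delta>0$. Together with Mertens' $\sum_{q\le n^\delta}\frac{\log q}{q}=\delta\log n+O(1)$, this is equivalent to the third condition.
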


\subsection{Poisson process}

Additionally, the Poisson distribution underlies the Poisson process, a stochastic process that describes the occurrence of events over time or space and is widely used in fields such as telecommunications \cite{bp-1991, k-1992, dv-2003, aghjw-2010, aaaak-2021}, reliability theory \cite{b-1996, n-2011, gx-2024, sw-2024, twct-2025, wdo-2025, c-2025}, epidemiology, and traffic engineering \cite{gr-2010, apav-2017, fmn-2024, iaur-2024, gjj-2025}.

$\bullet$ \textit{We  recall that process $N:(\Omega\times[0,\infty))\rightarrow\{0,1,\ldots\}$ is called a homogeneous Poisson process if $\mathbb{P}(N(0)=0)=1$, the process increments are independent, all process trajectories are continuous from the right, and
$$
\mathbb{P}(N(t-s)=k)={\rm e}^{-\lambda (t-s)}\frac{\big(\lambda(t-s)\big)^k}{k!}
$$
for all $0\leqslant s<t$ and $k\in\{0,1,\ldots\}$, where $\lambda>0$ is the process intensity.}

The Poisson distribution and the Poisson process are related but not the same. One is a probability distribution, while the other is a stochastic process. In fact, the Poisson process is the simplest process for describing how events occur randomly over time. The above applications relate to the fact that the Poisson process is the simplest counting renewal process, which counts events in the time interval $[0,t]$.

\subsection{Sum and product of the Poisson distributions}

Let us suppose that r.v.s $\xi_1$ and $\xi_2$ are independent, r.v. $\xi_1$ is distributed according to the Poisson law with parameter $\lambda_1$, and $\xi_2$ is distributed according to the Poisson law with parameter $\lambda_2$. Since r.v.s $\xi_1$ and $\xi_2$ are independent, we have that 
$$
\mathbb{P}(\xi_1=k,\xi_2=l)={\rm e}^{-(\lambda_1+\lambda_2)}\frac{\lambda^k_1\lambda_2^l}{k!\,l!}
$$
for any $k,l\in\{0,1,\ldots\}$. Therefore, for $m\in\{0,1,\ldots\}$, we get
\begin{align*}
    \mathbb{P}(\xi_1+\xi_2=m)&=\sum_{k,l\in\mathbb{N}_0:\,k+l=m}{\rm e}^{-(\lambda_1+\lambda_2)}\frac{\lambda^k_1\lambda_2^l}{k!\,l!}\\&=
    \frac{{\rm e}^{-(\lambda_1+\lambda_2)}}{m!}\sum_{k=0}^m\frac{m!}{k!\,(m-k)!}\lambda_1^k\lambda_2^{m-k}\\
    &={\rm e}^{-(\lambda_1+\lambda_2)}\frac{(\lambda_1+\lambda_2)^m}{m!},
\end{align*}
which implies that r.v.s distributed according to the Poisson law are closed with respect to convolution, i.e., the sum of two independent r.v.s with Poisson distributions is distributed according to the Poisson law. 

After expanding the set of Poisson distributions with a shift, we get that the shifted Poisson distributions are closed with respect to the convolution root. This fact follows from the following Raikov's theorem \cite{r-1937, r-1938}.
\begin{Theorem}
    Let $\xi_1$ and $\xi_2$ be two independent r.v.s and let $\xi_1+\xi_2=\xi$. Suppose that  r.v. $\xi$ is distributed according to the shifted Poisson law $\Pi(\lambda,c)$, i.e. 
$$
\mathbb{P}(\xi=x)={\rm e}^{-\lambda}\frac{\lambda^{x-c}}{(x-c)!},\, x\in\{c,c+1,c+2,\ldots\}
$$
with $\lambda>0$ and $c\in\mathbb{R}$. 
Then $\xi_1$ is distributed according to  $\Pi(\lambda_1,c_1)$, $\xi_2$ is distributed according to $\Pi(\lambda_2, c_2)$, and $\lambda_1+\lambda_2=\lambda, c_1+c_2=c$. 
\end{Theorem}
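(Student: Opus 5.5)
Raikov's theorem is a classical result, and the natural route is through analytic functions of exponential type. Set $\eta_j=\xi_j$ and shift so that the supports start at a convenient point. Since $\xi$ is lattice-valued on $c+\mathbb{N}_0$, I will first check that each $\xi_j$ is supported on a shifted lattice $c_j+\mathbb{N}_0$ with $c_1+c_2=c$. Let $c_j$ be the infimum of the support of $\xi_j$. This is finite, because $\xi$ is bounded below and $\mathbb{P}(\xi_1\le x)\mathbb{P}(\xi_2\le y)\le \mathbb{P}(\xi\le x+y)$. The inequality $\mathbb{P}(\xi_1\in A)\mathbb{P}(\xi_2\in B)\le\mathbb{P}(\xi\in A+B)$ then shows that every atom pair sums into $c+\mathbb{N}_0$. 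From this I get $c_1+c_2=c$, both variables are discrete, and each is supported in $c_j+\mathbb{N}_0$. After subtracting $c_j$, we may assume $c=c_1=c_2=0$ and all three variables are $\mathbb{N}_0$-valued.

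Next I pass to generating functions $g_j(z)=\mathbb{E}z^{\xi_j}$. Independence gives $g_1(z)g_2(z)={\rm e}^{\lambda(z-1)}$ on $|z|\le1$. For real $r>0$, Tonelli's theorem gives $g_1(r)g_2(r)=\mathbb{E}r^{\xi}={\rm e}^{\lambda(r-1)}<\infty$, and $g_2(r)\ge \mathbb{P}(\xi_2=0)>0$. The last positivity holds because $\mathbb{P}(\xi=0)=\mathbb{P}(\xi_1=0)\mathbb{P}(\xi_2=0)>0$. Hence $g_1$ is entire with nonnegative coefficients, and by the identity theorem the product relation extends to all of $\mathbb{C}$. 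In particular $g_1$ and $g_2$ have no zeros. Moreover $|g_1(z)|\le g_1(|z|)\le {\rm e}^{\lambda(|z|-1)}/\mathbb{P}(\xi_2=0)$, so $g_1$ has order at most one.

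By Hadamard's factorization, a zero-free entire function of order at most $1$ has the form $g_1(z)={\rm e}^{a+bz}$. The normalisation $g_1(1)=1$ gives $a=-b$. Nonnegativity and finiteness of the coefficient $g_1'(0)=b\,{\rm e}^{-b}=\mathbb{P}(\xi_1=1)$ give $b\ge0$; also $b\le\lambda$ by symmetry with $g_2={\rm e}^{(\lambda-b)(z-1)}$. Put $\lambda_1=b$ and $\lambda_2=\lambda-b$; then $\xi_j\sim\Pi(\lambda_j,c_j)$.

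Strictly speaking, the case $\lambda_j=0$ gives a degenerate law (a point mass), which is $\Pi$ with parameter $0$, so the statement should be read with that convention. The main obstacle is the order estimate together with Hadamard's theorem. The key trick there is bounding $|g_1|$ by $g_1(|z|)$ and dividing by the positive constant $\mathbb{P}(\xi_2=0)$, since it is this bound that makes order at most one available.
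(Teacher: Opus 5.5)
The paper gives no proof of this statement. It is quoted as Raikov's theorem, with a reference to Raikov's original papers, and serves only as background. Your proposal therefore supplies a self-contained argument where the paper has none, and the argument is correct. It follows the classical route: factors of an entire, zero-free generating function are themselves entire of exponential type, and Hadamard's theorem forces them to be exponentials of linear polynomials. You streamline this by first reducing to the lattice $\mathbb{N}_0$. That lets you work with probability generating functions and the elementary bound $|g_1(z)|\le g_1(|z|)$, rather than with characteristic functions extended off the real axis.

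A few steps deserve one more line.
\begin{itemize}
\item In the reduction step, argue with supports rather than atoms, since discreteness of $\xi_j$ is what you are establishing. Take $A$, $B$ in your inequality to be small neighbourhoods of points $s_1\in\mathrm{supp}\,\xi_1$ and $s_2\in\mathrm{supp}\,\xi_2$; this gives $s_1+s_2\in c+\mathbb{N}_0$. Fixing $s_2$ then puts $\mathrm{supp}\,\xi_1$ inside a translate of $\mathbb{N}_0$, so $\xi_1$ is lattice-valued.
\item The identity $c_1+c_2=c$ then follows from three facts: $c_1+c_2\in c+\mathbb{N}_0$, $\xi\geqslant c_1+c_2$ almost surely, and $\mathbb{P}(\xi=c)>0$.
\item For the identity theorem you also need $g_2$ to be entire. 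This follows from the symmetric bound using $\mathbb{P}(\xi_1=0)>0$.
\item Derive the realness of $b$ explicitly. From $g_1(0)=e^{-b}=\mathbb{P}(\xi_1=0)>0$ and $be^{-b}=\mathbb{P}(\xi_1=1)\geqslant 0$ you get $b\in[0,\infty)$.
\end{itemize}

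Your remark about $\lambda_j=0$ is a genuine correction, not pedantry. The trivial decomposition $\xi_1\equiv c_1$, $\xi_2=\xi-c_1$ shows the statement is false unless point masses are admitted as $\Pi(0,c_j)$. The paper's definition of $\Pi(\lambda,c)$ requires $\lambda>0$, so the statement as written there needs this convention.
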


In this paper, we consider the product of random variables distributed according to the Poisson laws. We observe that the product of Poisson random variables behaves very differently from their sum; it does not stay in the same family. It is evident that for two independent r.v.s $\xi_1\mathop{=}\limits^{d}\Pi(\lambda_1)$ and $\xi_2\mathop{=}\limits^{d}\Pi(\lambda_2)$
$$
\mathbb{P}(\xi_1\xi_2=m)=\prod_{k,l\in\mathbb{N}_0:\,kl=m}{\rm e}^{-(\lambda_1+\lambda_2)}\frac{\lambda^k_1\,\lambda_2^l}{k!\,l!}.
$$

It should be noted that when multiplying random variables distributed according to Poisson laws, the result not only falls outside the Poisson distribution family but also fundamentally changes its heaviness level.

$\bullet$\textit{ A r.v. $\xi$ with distribution $F_\xi(x)=\mathbb{P}(\xi\leqslant x)$ is said to be heavy-tailed, denoted $F_\xi\in\mathscr{H}$, if
$$
\mathbb{E}\,{\rm e}^{\delta \xi}=\int_{-\infty}^\infty {\rm e}^{\delta x}{\rm d}F_\xi(x)=\infty
$$
for any $\delta >0$. Otherwise, r.v. $\xi$ with distribution $F_\xi$ is said to be light-tailed, denoted $F_\xi\in\mathscr{H}^c$.}

The following well-known statement shows that the heaviness of a random variable is directly related to the tail $\overline{F}_\xi=1-F_\xi$ of the distribution $F_\xi$, see Theorem 2.6 in \cite{fkz-2013} or Lemma 1 in \cite{nwz-2022}.
\begin{Lemma}\label{l1}
    A r.v. $\xi$ with distribution $F_\xi$ is heavy-tailed if and only if
    $$
    \limsup_{x\rightarrow\infty}{\rm e}^{\delta x}\overline{F}_\xi(x)=\infty
    $$
    for any $\delta>0$.
\end{Lemma}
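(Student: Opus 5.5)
We prove the two implications separately, using only the definition of $\mathscr{H}$, the monotonicity of $\overline{F}_\xi$, and the integration-by-parts identity for nonnegative integrands:
$$
\int_{0}^{\infty}{\rm e}^{\delta x}\,{\rm d}F_\xi(x)=\overline{F}_\xi(0)+\delta\int_0^\infty {\rm e}^{\delta x}\,\overline{F}_\xi(x)\,{\rm d}x ,
$$
valid in $[0,\infty]$ by Tonelli's theorem, since ${\rm e}^{\delta x}=1+\delta\int_0^x {\rm e}^{\delta t}{\rm d}t$. The contribution of $(-\infty,0)$ is always bounded by $1$, so $\mathbb{E}\,{\rm e}^{\delta\xi}=\infty$ if and only if $\int_0^\infty {\rm e}^{\delta x}\overline{F}_\xi(x)\,{\rm d}x=\infty$.

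\emph{Sufficiency.} Assume $\limsup_{x\to\infty}{\rm e}^{\delta x}\overline{F}_\xi(x)=\infty$ for every $\delta>0$, and suppose, to the contrary, that $\mathbb{E}\,{\rm e}^{\delta_0\xi}<\infty$ for some $\delta_0>0$. Markov's inequality gives, for $x>0$,
$$
\overline{F}_\xi(x)\leqslant \mathbb{P}\bigl({\rm e}^{\delta_0\xi}\geqslant {\rm e}^{\delta_0 x}\bigr)\leqslant {\rm e}^{-\delta_0 x}\,\mathbb{E}\,{\rm e}^{\delta_0\xi}.
$$
Hence ${\rm e}^{\delta_0 x}\overline{F}_\xi(x)$ is bounded on $(0,\infty)$, so its $\limsup$ is finite. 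This contradicts the hypothesis with $\delta=\delta_0$, and therefore $F_\xi\in\mathscr{H}$.

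\emph{Necessity.} This is the step that requires an argument, because divergence of an integral does not by itself force the integrand to be unbounded. We use the slack between two exponents. Assume $F_\xi\in\mathscr{H}$, fix $\delta>0$, and suppose that $\limsup_{x\to\infty}{\rm e}^{\delta x}\overline{F}_\xi(x)<\infty$. Then there exist $C<\infty$ and $x_0>0$ such that $\overline{F}_\xi(x)\leqslant C{\rm e}^{-\delta x}$ for all $x\geqslant x_0$. Applying the identity above with $\delta/2$ in place of $\delta$ gives
$$
\int_0^\infty {\rm e}^{\delta x/2}\,\overline{F}_\xi(x)\,{\rm d}x\leqslant \int_0^{x_0}{\rm e}^{\delta x/2}\,{\rm d}x + C\int_{x_0}^\infty {\rm e}^{-\delta x/2}\,{\rm d}x<\infty .
$$
Consequently $\mathbb{E}\,{\rm e}^{(\delta/2)\xi}<\infty$, which contradicts heaviness. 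So the $\limsup$ is infinite for every $\delta>0$. The decisive point is that assuming heaviness for \emph{all} $\delta$ lets us test at $\delta/2$, where the tail bound at rate $\delta$ makes the integral converge.
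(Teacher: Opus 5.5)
Your proof is correct and is essentially the standard one. The paper does not prove this lemma itself but cites Theorem 2.6 of Foss, Korshunov and Zachary, and that argument runs the same way: a Markov bound $\overline{F}_\xi(x)\leqslant {\rm e}^{-\delta x}\,\mathbb{E}\,{\rm e}^{\delta\xi}$ in one direction, and in the other a tail bound at rate $\delta$ turned into $\mathbb{E}\,{\rm e}^{\gamma\xi}<\infty$ for $\gamma<\delta$ via the tail-integral identity. One cosmetic nit: your identity is exact when $\int_0^\infty$ means the integral over $(0,\infty)$; over $[0,\infty)$ the left side gains an extra $\mathbb{P}(\xi=0)$, which does not matter since only finiteness is used.
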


If r.v. $\xi$ is distributed according to the Poisson law $\Pi(\lambda)$ with parameter $\lambda>0$, then
\begin{align*}
    \mathbb{E}{\rm e}^{\delta \xi}={\rm e}^{-\lambda}\sum_{k=1}^\infty\frac{\big(\lambda{\rm e}^\delta\big)^k}{k!}={\rm e}^{\lambda\big({\rm e}^\delta -1\big)}<\infty
\end{align*}
for any $\delta>0$. It follows from this that $F_\xi\in \mathscr{H}^c$ in the case.

If $\xi_1\mathop{=}\limits^{d}\Pi(\lambda_1)$ and $\xi_2\mathop{=}\limits^{d}\Pi(\lambda_2)$ are two independent r.v.s, then 
\begin{align}\label{ar}
    \mathbb{E}{\rm e}^{\delta\xi_1\xi_2}&={\rm e}^{-(\lambda_1+\lambda_2)}\sum_{m=0}^\infty\sum_{k,l\in\mathbb{N}_0:\,kl=m}\frac{{\rm e}^{\,\delta kl}\lambda_1^k\lambda_2^l}{k!\,l!}\nonumber\\
    &={\rm e}^{-(\lambda_1+\lambda_2)}\sum_{k=0}^\infty\sum_{l=0}^\infty \frac{{\rm e}^{\,\delta kl}\lambda_1^k\lambda_2^l}{k!\,l!}\nonumber\\
    &\geqslant{\rm e}^{-(\lambda_1+\lambda_2)}\sum_{k=0}^\infty\frac{{\rm e}^{\,\delta k^2}\big(\lambda_1\lambda_2\big)^k}{\big(k!\big)^2}
\end{align}
for any $\delta>0$. According to Stirling's approximation 
$$
\big(k!\big)^2=2\pi k\bigg(\frac{k}{{\rm e}}\bigg)^{2k}\bigg(1+O\Big(\frac{1}{k}\Big)\bigg).
$$
Therefore,\vspace{-2mm}
\begin{align*}
&\quad\quad\quad\quad\qquad\frac{{\rm e}^{\,\delta k^2}\big(\lambda_1\lambda_2\big)^k}{\big(k!\big)^2}\\
&=
\exp\bigg\{\delta k^2+k\log(\lambda_1\lambda_2)-\log 2\pi k-2k\log\frac{k}{{\rm e}}-\log\bigg(1+O\Big(\frac{1}{k}\Big)\bigg)\bigg\}\mathop{\rightarrow}\limits_{k\rightarrow\infty}\infty,
\end{align*}
which implies that the distribution of $\xi_1\xi_2$ is heavy-tailed because 
$$
\mathbb{E}\,{\rm e}^{\delta\xi_1\xi_2}=\infty
$$
for any positive $\delta$ due to estimate \eqref{ar}.

If we multiply even more random variables distributed according to the Poisson law, we get a distribution with an even heavier tail, because  for a collection of independent distributions $\{\xi_k\mathop{=}\limits^{d}\Pi(\lambda_k), \lambda_k>0\}_{k=1}^N$, $N\geqslant 3$, the following  estimates hold
\begin{align*}
\mathbb{E}\,{\rm e}^{\delta\xi_1\xi_2\ldots\xi_N}&\geqslant\mathbb{E}\,{\rm e}^{\delta\xi_1\xi_2\ldots\xi_N}\mathbb{I}_{\{\xi_N\geqslant 1\}}\\
&\geqslant \big(1-{\rm e}^{-\lambda_N}\big)\mathbb{E}\,{\rm e}^{\delta\xi_1\xi_2\ldots\xi_{N-1}}\\
&\geqslant\big(1-{\rm e}^{-\lambda_{N-1}}\big)\big(1-{\rm e}^{-\lambda_{N}}\big)\mathbb{E}\,{\rm e}^{\delta\xi_1\xi_2\ldots\xi_{N-2}}\\
&\geqslant \ldots \geqslant \prod_{k=3}^N\big(1-{\rm e}^{-\lambda_{k}}\big)\mathbb{E}\,{\rm e}^{\delta\xi_1\xi_2}.
\end{align*}

In this paper, we will derive the asymptotic formula for the tail of the Poisson product. From the main result obtained, all the above facts about the heaviness of the Poisson product can be easily obtained using Lemma \ref{l1}.

\section{Main result}

The following assertion is the main result of the paper. In the theorem below, we give an asymptotic formula for the tail of the product of a finite number of independent Poisson random variables.

\begin{Theorem}\label{t1}
Let \(\{\xi_1,\xi_2,\dots,\xi_N\}\), \(N\geqslant 1\), be a collection of independent Poisson random variables with positive parameters \(\lambda_1,\lambda_2,\dots,\lambda_N\). For \(n\in\mathbb{N}\), define
\[
\mathcal{P}_n^{(N)}=\mathbb{P}\bigl(\xi_1\xi_2\cdots\xi_N\geqslant n\bigr).
\]
Then, as \(n\to\infty\),
\begin{equation}\label{eq:pnm-main}
\begin{aligned}
\mathcal{P}_n^{(N)}
&=
(2\pi)^{(N-1)/2}
N^{(N-2)/2}
(\log n)^{-(N-1)/2}
n^{(N-1)/(2N)}
\\
&\quad\times
\exp\!\Biggl(
-\,n^{1/N}\log n
+
n^{1/N}\biggl(
N+\log\prod_{i=1}^N\lambda_i
\biggr)
-
\sum_{i=1}^N\lambda_i
+
O(\log n)
\Biggr).
\end{aligned}
\end{equation}
In particular,
\begin{equation}\label{eq:log-pnm}
\log \mathcal{P}_n^{(N)}
=
-\,n^{1/N}\log n
+
n^{1/N}\biggl(
N+\log\prod_{i=1}^N\lambda_i
\biggr)
-
\sum_{i=1}^N\lambda_i
+
O(\log n).
\end{equation}
\end{Theorem}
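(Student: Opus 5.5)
The plan is to compare $\mathcal{P}_n^{(N)}$ with its largest single term. Write
\[
\mathcal{P}_n^{(N)}={\rm e}^{-\sum_i\lambda_i}\sum_{\substack{k\in\mathbb{N}^N\\ k_1\cdots k_N\geqslant n}}\prod_{i=1}^N\frac{\lambda_i^{k_i}}{k_i!},
\]
and set $m=n^{1/N}$, so that $Nm\log m=m\log n$. Stirling's formula in the form $\log k!=k\log k-k+\tfrac12\log(2\pi k)+O(1/k)$ turns each summand into ${\rm e}^{f(k)+O(\log n)}$, where
\[
f(k)=\sum_{i=1}^N\bigl(k_i\log(e\lambda_i)-k_i\log k_i\bigr).
\]
At the symmetric point $k_i=m$ one gets exactly $f=-m\log n+m\bigl(N+\log\prod_i\lambda_i\bigr)$, which is the exponent in \eqref{eq:log-pnm}.

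The \textbf{lower bound} is then immediate. Take the single term $k_i=\lceil m\rceil$ for all $i$; its product is at least $n$. Replacing $m$ by $\lceil m\rceil$ changes $f$ only by $O(\log m)$, so this one term already gives \eqref{eq:log-pnm} from below. The \textbf{upper bound} has two parts.
\begin{itemize}
\item \emph{Maximising $f$.} I would pass to the continuous problem of maximising the concave function $f$ on $\{\sum_i\log k_i\geqslant\log n\}$. The Lagrange condition is $k_i\log(k_i/\lambda_i)=\mu$ for all $i$, and it is solved by $k_i=\mu/W(\mu/\lambda_i)$ with $W$ the Lambert function.
\item \emph{Counting.} The number of lattice points with $f(k)\geqslant f_{\max}-t$ grows only polynomially in $m$ and $t$, because $f$ is strictly concave with curvature of order $1/m$. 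Far from the optimum, $f$ decays at least linearly in $\sum_i k_i$. Hence the whole sum is ${\rm e}^{f_{\max}+O(\log n)}$, and the Gaussian prefactor $(2\pi)^{(N-1)/2}N^{(N-2)/2}(\log n)^{-(N-1)/2}n^{(N-1)/(2N)}$ arises from this constrained Gaussian integral.
\end{itemize}

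The \textbf{main obstacle}, which I expect may actually break the statement as written, is the comparison of $f_{\max}$ with the symmetric value $f(m,\dots,m)$. Parametrise $k_i=m\,{\rm e}^{u_i}$ with $\sum_i u_i=0$, and put $a_i=\log\lambda_i$ and $L=\log m$. Expanding to second order gives
\[
f=f(m,\dots,m)+m\Bigl(\sum_i u_i a_i-\bigl(\tfrac{L}{2}+O(1)\bigr)\sum_i u_i^2\Bigr)+\dots
\]
Maximising over $u$ with $\sum_i u_i=0$ yields a gain of $\frac{m}{2\log m}\sum_i(a_i-\bar a)^2\,(1+o(1))$, where $\bar a=\frac1N\sum_i a_i$. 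This gain is $o(m)$ but \emph{not} $O(\log n)$ unless all $\lambda_i$ coincide.

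So I would first verify this computation carefully; the lower bound, using the lattice point nearest the Lambert-function optimum, makes the extra term rigorous. If it is confirmed, what the method proves is \eqref{eq:log-pnm} with an additional term
\[
\frac{N\,n^{1/N}}{2\log n}\sum_{i=1}^N\Bigl(\log\lambda_i-\frac1N\sum_{j=1}^N\log\lambda_j\Bigr)^2,
\]
or, equivalently, with remainder $O\bigl(n^{1/N}/\log n\bigr)$. The statement exactly as written, with remainder $O(\log n)$, would then follow in the equal-parameter case $\lambda_1=\dots=\lambda_N$. In the general case I would keep the exact Lambert-function saddle value $f_{\max}$ in place of the symmetric one.
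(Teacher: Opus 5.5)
Your objection is correct, and it settles the matter: for $N\geqslant 2$ and $\lambda_1,\dots,\lambda_N$ not all equal, the statement is false. The two displayed forms \eqref{eq:pnm-main} and \eqref{eq:log-pnm} are equivalent, because the prefactor $(2\pi)^{(N-1)/2}N^{(N-2)/2}(\log n)^{-(N-1)/2}n^{(N-1)/(2N)}$ is ${\rm e}^{O(\log n)}$. So both fail.

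Your single-term lower bound already proves this rigorously. Set $m=n^{1/N}$, $a_i=\log\lambda_i$, $\bar a=\frac1N\sum_j a_j$ and $k_i=\bigl\lceil m\,{\rm e}^{N(a_i-\bar a)/\log n}\bigr\rceil$. Then $\prod_i k_i\geqslant n$, and Stirling together with your second-order expansion gives
\[
\log\mathcal{P}_n^{(N)}\geqslant -n^{1/N}\log n+n^{1/N}\Bigl(N+\log\prod_{i=1}^N\lambda_i\Bigr)+\frac{N\,n^{1/N}}{2\log n}\sum_{i=1}^N(a_i-\bar a)^2+O\Bigl(\frac{n^{1/N}}{(\log n)^2}\Bigr).
\]
This is incompatible with an $O(\log n)$ remainder. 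A numerical check agrees: take $N=2$, $\lambda_1=1$, $\lambda_2=100$, $n=10^6$. Then $f(1000,1000)\approx-7210$, while $f(650,1539)\approx-6228$ with $650\cdot 1539\geqslant n$. The gap is about $980$, against $\log n\approx 14$.

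The paper's proof breaks exactly where you expected.
\begin{itemize}
\item It solves the Lagrange system correctly, obtaining $k_i^*=(s-\frac12)/W\bigl((s-\frac12)/\lambda_i\bigr)$, which depends on $\lambda_i$.
\item It then keeps only $k_i^*\sim n^{1/N}$, and in the final evaluation of $\Psi_n(k^{\prime *})$ it substitutes $k_i^*=n^{1/N}$ exactly.
\item That is not legitimate at precision $O(\log n)$. In fact $k_i^*=n^{1/N}\bigl(1+N(a_i-\bar a)/\log n+O((\log n)^{-2})\bigr)$, and the gradient of $\Psi_n$ at the symmetric point is $\bigl(\log(\lambda_i/\lambda_N)\bigr)_{i<N}$, which is of order one.
\item So the substitution discards an amount of order $n^{1/N}/\log n$. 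It is positive, since $k^{\prime *}$ is the maximizer, and it is precisely your term.
\end{itemize}
The rest of the machinery (first balanced layer, sum-to-integral comparison, Wong's theorem) only concerns factors ${\rm e}^{O(\log n)}$ and cannot repair this. Hence the theorem as written is true only for $N=1$ or $\lambda_1=\dots=\lambda_N$, where the constrained maximizer is the symmetric point.

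Two refinements of your write-up.

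\textbf{The extra term and the weaker remainder are not equivalent.} The next correction to $f_{\max}-f(m,\dots,m)$ is of order $n^{1/N}/(\log n)^2$, with a coefficient that is generically nonzero. So adding only $\frac{N n^{1/N}}{2\log n}\sum_i(a_i-\bar a)^2$ does not restore an $O(\log n)$ error; the symmetric formula with remainder $O(n^{1/N}/\log n)$ is true but strictly weaker. The sharp statement is the one in your last sentence, $\log\mathcal{P}_n^{(N)}=f_{\max}+O(\log n)$. Since $k_i\log(k_i/\lambda_i)=s$ at the saddle, it has the closed form $f_{\max}=\sum_i k_i^*-Ns$, with $k_i^*=s/W(s/\lambda_i)$ and $s$ fixed by $\prod_i k_i^*=n$.

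\textbf{The upper bound needs less than your curvature count.}
\begin{itemize}
\item Since $k!\geqslant(k/{\rm e})^k$, each summand is at most ${\rm e}^{f(k)}$.
\item $f(k)\leqslant f_{\max}$ on the whole convex set $\{\sum_i\log k_i\geqslant\log n\}$. This holds because $f$ is concave and its unconstrained maximizer $(\lambda_1,\dots,\lambda_N)$ lies outside that set.
\item There are at most $n(\log n)^N$ summands with all $k_i<n^{1/N}\log n$, so they contribute at most ${\rm e}^{f_{\max}+O(\log n)}$.
\item The remaining summands are controlled by the Chernoff bound from Lemma~\ref{lem:balanced-regime-poisson}. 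They total ${\rm e}^{-(1/N+o(1))n^{1/N}(\log n)^2}$, far below ${\rm e}^{f_{\max}}\geqslant{\rm e}^{-n^{1/N}\log n+O(n^{1/N})}$.
\end{itemize}
Together with your lower bound, this gives a complete and much shorter proof of the corrected statement than the paper's Laplace-method route. That route aims at a Gaussian prefactor that is invisible at precision ${\rm e}^{O(\log n)}$.
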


\section{Proof of the main Theorem \ref{t1}}

In this Section, we prove the asymptotic formula \eqref{eq:pnm-main}. This is sufficient for the proof of Theorem \ref{t1}, because the equality \eqref{eq:log-pnm} follows from the equality \eqref{eq:pnm-main} immediately.
By Lemmas \ref{lem:balanced-regime-poisson} and \ref{lem:first-layer-dominance}, the tail probability \(\mathcal{P}_n^{(N)}\) is asymptotically determined by the first balanced layer defined in \eqref{po}, i.e.
$$
\mathcal{P}_n^{(N)}\mathop{\sim}_{n\rightarrow\infty}\mathcal{P}_{C,n}={\rm e}^{-\sum_{i=1}^N\lambda_i}
\sum_{\mathbf{k}\in\mathcal{C}_n}
\prod_{i=1}^N \frac{\lambda_i^{k_i}}{k_i!}=\sum_{\mathbf{k}\in\mathcal{C}_n}\mathbb{P}\big(\xi_1=k_1,\xi_2=k_2,\ldots,\xi_N=k_N\big)
$$
where
\[
\begin{aligned}
\mathcal{C}_n
&:=
\left\{ \mathbf{k}=(k_1,k_2,\dots,k_N)\in \mathcal{B}_n: k_1\ldots k_{N-1}(k_N-1)< n \right\},
\\[1mm]
\mathcal{B}_n
&:=
\bigg\{
\mathbf{k}=(k_1,k_2,\dots,k_N):
\prod_{j=1}^N k_j\geqslant n,\  k_i>a_n\ \text{for all}\ i\in\{1,2,\ldots,N\}
\bigg\}.
\end{aligned}
\]
with $a_n=n^{1/N}/\log n$.

We break the proof of Theorem \ref{t1} into several steps.

\subsection{Setup and Lagrange equations}

If $\mathbf{k}=(k_1,k_2,\ldots,k_N)\in\mathcal{C}_n$, then according to the Stirling's approximation formula 
$$
\log k!
=
k\log k-k+\frac12\log(2\pi k)+O\!\left(\frac1k\right),
$$
we have that 
\begin{align*}
&\qquad\qquad\log \mathbb{P}(\xi_1=k_1,\xi_2=k_2,\dots,\xi_N=k_N)\\
&= -\sum_{i=1}^N \lambda_i+ \sum_{i=1}^N k_i\bigl(\log\lambda_i-\log k_i+1\bigr)- \frac12\sum_{i=1}^N \log(2\pi k_i)
+ O\!\left(\sum_{i=1}^N \frac1{k_i}\right)\\
&\qquad\qquad:=T(\mathbf{k})+O\left(\frac{\log n}{n^{1/N}}\right).
\end{align*}

The dominant contribution comes from maximizing expression $T(\mathbf{k})$ under the
constraint
$
\prod\limits_{i=1}^N k_i - n = 0.
$
Let us introduce the Lagrangian
\[
L(\mathbf{k},\alpha)
=
T(\mathbf{k})
- \alpha\Bigl(\prod_{i=1}^N k_i - n\Bigr).
\]
The saddle point  of this Lagrangian $ (\mathbf{k}^*,\alpha^*)=(k_1^*,\dots,k_N^*,\alpha^*)$ satisfies the following equations
\begin{equation}\label{poi}
\begin{cases}
\frac{\partial L}{\partial k_i}\bigg{|}_{\substack{\mathbf{k}=\mathbf{k}^*\\ \alpha=\alpha^*}}
&=
\log\lambda_i-\log k_i^*-\frac{1}{2k^*_i}
-\alpha^*\prod\limits_{\substack{1\leqslant j\leqslant N \\ j\ne i}}k^*_j
=0,
\  i=1,2,\dots,N,\\
\prod\limits_{i=1}^N k^*_i &= n.
\end{cases}
\end{equation}

\subsection{Solution of the Lagrange system}
\label{sec:lambert-solution}

Let us consider the system \eqref{poi}. According to the second equation of this system 
\[
\prod_{j\ne i} k^*_j = \frac{n}{k^*_i}.
\]
Hence, we can  rewrite the stationarity equations in the form
\[
\log \lambda_i - \log k^*_i - \frac{1}{2k^*_i} - \alpha \frac{n}{k^*_i}=0,
\quad i=1,2,\dots,N,
\]
or
\[
k^*_i \exp\left\{\frac{\alpha n+\tfrac12}{k^*_i}\right\}=\lambda_i,
\quad i=1,\dots,N.
\]
Now introduce, for each \(i\),
\[
t_i=-\,\frac{\alpha n+\tfrac12}{k^*_i}.
\]
Then 
\[
k^*_i=-\,\frac{\alpha n+\tfrac12}{t_i},
\]
and, therefore,
\[
t_i {\rm e}^{t_i}
=
-\,\frac{\alpha n+\tfrac12}{\lambda_i}
\]
for any $i\in\{1,2,\ldots,N\}$. This implies that
\[
t_i
=
W\left(
-\frac{\alpha n+\tfrac12}{\lambda_i}
\right),
\]
and 
\begin{equation}\label{eq:ki-lambert-general}
k_i^*
=
-\,\frac{\alpha n+\tfrac12}{
W\!\left(-(\alpha n+\tfrac12)/\lambda_i\right)
},
\quad i=1,2,\dots,N,
\end{equation}
where the symbol $W$ denotes the Lambert-$W$ function\cite{e-1783, cghjk-1996}.

\subsection{Asymptotics of the critical-point parameters}\label{subsec:s-ki}

Let us consider expressions \eqref{eq:ki-lambert-general}. If $\alpha\geqslant 0$, then \(k^*_i<\lambda_i\) for all \(i\), and therefore
\[
n=\prod_{i=1}^N k^*_i<\prod_{i=1}^N \lambda_i,
\]
which is impossible for sufficiently large \(n\). Hence, in the asymptotic regime
\(n\to\infty\), necessarily \(\alpha<0\). Let us denote temporarily 
\[
s:=-\alpha n>0.
\]

In this subsection, we analyze  the quantities
\begin{equation}\label{pai}
k_i^*
=
\frac{s-\tfrac12}{
W\!\bigl((s-\tfrac12)/\lambda_i\bigr)},
\quad i=1,\dots,N,
\quad
s=-\alpha n>0,
\end{equation}
obtained from the Lagrange equations \eqref{poi}.

From the Lambert--$W$ representation \eqref{pai} and the constraint
$
\prod_{i=1}^N k_i^* = n,
$
we get
\begin{equation}\label{eq:s-implicit-general}
(s-\tfrac12)^N
=
n\prod_{i=1}^N
W\!\left(\frac{s-\tfrac12}{\lambda_i}\right).
\end{equation}
Since \(s\mathop{\rightarrow}\limits_{n\rightarrow\infty}\infty\), we may use the large--argument expansion
\[
W(x)=\log x-\log\log x+o(1),
\quad x\to\infty.
\]
Applied to \(x=(s-\tfrac12)/\lambda_i\), this gives
\[
W\left(\frac{s-\tfrac12}{\lambda_i}\right)
=
\log s-\log\log s+o(1),
\qquad i=1,\dots,N.
\]

Substituting into \eqref{eq:s-implicit-general} and dividing by \((\log s)^N\), we obtain
\[
\frac{s^N}{(\log s)^N}
=
n\left[
1-N\frac{\log\log s}{\log s}
+o\!\left(\frac{\log\log s}{\log s}\right)
\right].
\]
Taking \(N\)-th roots gives
\[
\frac{s}{\log s}
=
n^{1/N}
\left[
1-N\frac{\log\log s}{\log s}
+o\!\left(\frac{\log\log s}{\log s}\right)
\right]^{1/N}.
\]
Since \(\frac{\log\log s}{\log s}\to0\) as \(s\to\infty\), the bracket equals \(1+o(1)\), hence
\begin{equation}\label{eq:s-equation-general}
\frac{s}{\log s}\sim n^{1/N},
\qquad s\to\infty.
\end{equation}

\subsubsection*{Asymptotic solution for $s$}

From \eqref{eq:s-equation-general} we have
\[
\frac{s}{\log s}\sim n^{1/N},
\qquad n\to\infty.
\]
Equivalently,
\[
\frac{s}{\log s}=n^{1/N}(1+o(1)).
\]
Taking logarithms gives
\[
\log s-\log\log s
=
\frac1N\log n+\log(1+o(1)).
\]
Since \(\log(1+o(1))=o(1)\), we obtain
\[
\log s-\log\log s
=
\frac1N\log n+o(1).
\]
In particular
\[
\log s-\log\log s
=
\log s\left(1-\frac{\log\log s}{\log s}\right)
\sim \log s.
\]
Comparing with the previous relation yields
\[
\log s\sim \frac1N\log n.
\]
Substituting this back into
\[
\frac{s}{\log s}\sim n^{1/N},
\]
we obtain
\[
s
\sim
n^{1/N}\log s
\sim
\frac1N\,n^{1/N}\log n.
\]
Therefore,
\begin{equation}\label{eq:s-final-general}
s
=
\frac1N\,n^{1/N}\log n\,(1+o(1)).
\end{equation}

Since \(s=-\alpha n\), it follows that
\begin{equation}\label{eq:alpha-final-general}
\alpha
=
-\frac{s}{n}
=
-\frac1N\,\frac{\log n}{n^{(N-1)/N}}\,(1+o(1)).
\end{equation}
\subsubsection*{Asymptotics of the critical-point coordinates}

Using the Lambert--$W$ representation and \eqref{eq:s-final-general}, we have
\[
k_i^*
=
\frac{s-\tfrac12}{W((s-\tfrac12)/\lambda_i)}
\sim
\frac{s}{\log s},
\qquad i=1,\dots,N.
\]
From \eqref{eq:s-equation-general} we know that
\[
\frac{s}{\log s}\sim n^{1/N},
\qquad n\to\infty.
\]
Therefore,
\begin{equation}\label{eq:kstar-final-general}
k_i^*\sim n^{1/N},
\qquad i=1,\dots,N,
\qquad n\to\infty.
\end{equation}

\subsection{Reduction to \(N-1\) dimensions on the first balanced layer}

We now reduce the first-layer contribution to \(N-1\) free variables.

Recall that
\[
\mathcal{C}_n
:=
\left\{
\mathbf{k}=(k_1,k_2,\dots,k_N)\in \mathcal{B}_n:
k_1\ldots k_{N-1}(k_N-1)< n
\right\},
\]

For \(k'=(k_1,\dots,k_{N-1})\) with \(k_i>a_n\), define
\[
x(k'):=\frac{n}{k_1\cdots k_{N-1}},
\qquad
\Phi_n(k'):=T\!\left(k_1,\dots,k_{N-1},x(k')\right),
\]
and
\[
L(k'):=\max\!\left\{\lfloor a_n\rfloor+1,\left\lceil \frac{n}{k_1\cdots k_{N-1}}\right\rceil\right\}.
\]

By Lemma~\ref{lem:first-layer-dominance}, only the first balanced layer contributes asymptotically. Moreover, if \(\mathbf{k}=(k',k_N)\in\mathcal{C}_n\), then
\[
k_N=L(k').
\]

Thus every point of \(\mathcal{C}_n\) may be written in the form
\[
\mathbf{k}=\bigl(k',L(k')\bigr).
\]
We now compare \(T\bigl(k',L(k')\bigr)\) with \(\Phi_n(k')\).

Let \((k',L(k'))\in\mathcal{C}_n\), and write
\[
r:=k_1\cdots k_{N-1},
\qquad
x:=x(k')=\frac nr,
\qquad
L:=L(k').
\]
Since \((k',L)\in\mathcal{C}_n\), by the definition of the first layer we have
\[
r(L-1)<n\le rL.
\]
Dividing by \(r\), we obtain
\[
L-1<x\le L,
\]
and hence
\[
0\le L-x<1.
\]

Next, since \((k',L)\in B_n\), we have \(k_i>a_n\), where
\[
a_n:=\frac{n^{1/N}}{\log n},\] for \(i=1,\dots,N-1\). Hence
\[
x=\frac{n}{k_1\cdots k_{N-1}}
<
\frac{n}{a_n^{\,N-1}}
=
n^{1/N}(\log n)^{N-1}.
\]
Also, from \(L-1<x\) and \(L\ge \lfloor a_n\rfloor+1\), it follows that
\[
x>\lfloor a_n\rfloor.
\]

Therefore every \(t\) between \(x\) and \(L\) satisfies
\[
t>\lfloor a_n\rfloor,
\qquad
t<n^{1/N}(\log n)^{N-1}+1.
\]

Consider \(T(k_1,\dots,k_{N-1},t)\) as a function of the last variable \(t\), with \(k_1,\dots,k_{N-1}\) fixed.

Thus
\[
\frac{\partial}{\partial t}T(k_1,\dots,k_{N-1},t)
=
\log\lambda_N-\log t-\frac{1}{2t}
=
O(\log n)
\]
for all such intermediate values \(t\).

Applying the mean value theorem, we obtain
\[
T(k',L)-T(k',x)
=
\frac{\partial}{\partial t}T(k_1,\dots,k_{N-1},t_*)
\,(L-x)
=
O(\log n)
\]
for some \(t_*\) between \(x\) and \(L\). Since \(T(k',x)=\Phi_n(k')\), it follows that
\[
T\bigl(k',L(k')\bigr)=\Phi_n(k')+O(\log n)
\]
for every \((k',L(k'))\in\mathcal{C}_n\).

Consequently,
\[
\mathcal{P}_{C,n}
=
\sum_{k':\,(k',L(k'))\in\mathcal{C}_n}
\exp\!\bigl(\Phi_n(k')+O(\log n)\bigr).
\]

\subsection{Passage from the first-layer sum to the corresponding integral}

Set
\[
S_n
:=
\sum_{k':\,(k',L(k'))\in\mathcal{C}_n}
\exp\!\bigl(\Phi_n(k')\bigr),
\qquad
I_n
:=
\int_{(a_n,\infty)^{N-1}} \exp\!\bigl(\Phi_n(x')\bigr)\,dx'.
\]
By Lemma~\ref{lem:sum-integral},
\[
S_n\sim I_n.
\]
Moreover, by the reduction to \(N-1\) dimensions on the first balanced layer,
\[
T\bigl(k',L(k')\bigr)=\Phi_n(k')+O(\log n).
\]
Therefore,
\[
\mathcal{P}_{C,n}
=
\exp\!\bigl(O(\log n)\bigr)\,S_n
=
\exp\!\bigl(O(\log n)\bigr)\,I_n\bigl(1+o(1)\bigr).
\]
Thus, we now focus on the integral \(I_n\), and return to the remaining factor later.
\[
I_n
=
\int_{(a_n,\infty)^{N-1}}
\exp\!\left(
T\!\left(x_1,\dots,x_{N-1},\frac{n}{x_1\cdots x_{N-1}}\right)
\right)
\,dx_1\cdots dx_{N-1}.
\]
Explicitly,
\[
\begin{aligned}
I_n
=
\int_{(a_n,\infty)^{N-1}}
\exp\Bigg\{
&-\sum_{i=1}^N \lambda_i
+\sum_{i=1}^{N-1} x_i\bigl(\log\lambda_i-\log x_i+1\bigr) \\
&\quad
+\frac{n}{x_1\cdots x_{N-1}}
\left(
\log\lambda_N-\log\frac{n}{x_1\cdots x_{N-1}}+1
\right) \\
&\quad
-\frac12\sum_{i=1}^{N-1}\log(2\pi x_i)
-\frac12\log\!\left(2\pi\frac{n}{x_1\cdots x_{N-1}}\right)
\Bigg\}
\,dx_1\cdots dx_{N-1}.
\end{aligned}
\]
For our result, it is enough to retain the exponent up to an \(O(\log n)\) term. Indeed,
\[
-\frac12\sum_{i=1}^{N-1}\log(2\pi x_i)
-\frac12\log\!\left(2\pi\frac{n}{x_1\cdots x_{N-1}}\right)
=
-\frac{N}{2}\log(2\pi)-\frac12\log n,
\]
so these logarithmic terms contribute only \(O(\log n)\) to the exponent and may be absorbed into the separate exponential error already present above. Accordingly, we introduce
\[
\Psi_n(x')
:=
-\sum_{i=1}^N \lambda_i
+\sum_{i=1}^{N-1} x_i\bigl(\log\lambda_i-\log x_i+1\bigr)
+\frac{n}{x_1\cdots x_{N-1}}
\left(
\log\lambda_N-\log\frac{n}{x_1\cdots x_{N-1}}+1
\right),
\]
so that
\[
I_n
=
\exp\!\bigl(O(\log n)\bigr)
\int_{(a_n,\infty)^{N-1}}
\exp\!\bigl(\Psi_n(x')\bigr)\,dx_1\cdots dx_{N-1}.
\]

Finally, we now apply the multidimensional Laplace approximation to this integral.

\subsection{Multidimensional Laplace approximation}

Set
\[
K_n
:=
\int_{(a_n,\infty)^{N-1}}
\exp\!\bigl(\Psi_n(x')\bigr)\,dx_1\cdots dx_{N-1}.
\]
Also from previous section,
\[
I_n=\exp\!\bigl(O(\log n)\bigr)\,K_n.
\]

We now apply a standard multidimensional Laplace approximation; see Wong~\cite{w-2001}. Write
\[
D_n:=(a_n,\infty)^{N-1},
\qquad
g(x')\equiv 1,
\qquad
f_n(x'):=-\frac{\Psi_n(x')}{n^{1/N}},
\qquad
\lambda:=n^{1/N}.
\]
Then
\[
K_n=\int_{D_n} g(x')\,e^{-\lambda f_n(x')}\,dx'.
\]
Also, if \(k^{\prime *}\) is the unique interior minimizer of \(f_n\), then
\[
K_n
\sim
\left(\frac{2\pi}{\lambda}\right)^{(N-1)/2}
g(k^{\prime *})\,(\det A)^{-1/2}\,e^{-\lambda f_n(k^{\prime *})},
\]
\[
A
=
\left(
\frac{\partial^2 f_n}{\partial x_i \partial x_j}(k^{\prime *})
\right)_{1\le i,j\le N-1}.
\]
We now verify the assumptions of this approximation.

\medskip
\noindent
(i) Absolute convergence. Write
\[
\Psi_n(x')
=
-\sum_{i=1}^N \lambda_i
+
\sum_{i=1}^{N-1} x_i\bigl(\log \lambda_i-\log x_i+1\bigr)
+
y\bigl(\log\lambda_N-\log y+1\bigr),
\qquad
y:=\frac{n}{x_1\cdots x_{N-1}}.
\]
Set
\[
\varphi(y):=y\bigl(\log\lambda_N-\log y+1\bigr),
\qquad y>0.
\]
Then \(\varphi\) attains its maximum at \(y=\lambda_N\), and therefore
\[
\varphi(y)\le \lambda_N
\qquad\text{for all }y>0.
\]
Hence
\[
\Psi_n(x')
\le
-\sum_{i=1}^{N-1}\lambda_i
+
\sum_{i=1}^{N-1} x_i\bigl(\log\lambda_i-\log x_i+1\bigr).
\]

Now, for each \(i=1,\dots,N-1\), consider
\[
h_i(x):=x\bigl(\log\lambda_i-\log x+1\bigr)+2\log x,
\qquad x>0.
\]
Since \(h_i\) is continuous on \((0,\infty)\) and
\[
h_i(x)\to -\infty
\qquad\text{as }x\downarrow 0
\quad\text{and}\quad
x\to\infty,
\]
there exists a constant \(C_i\in\mathbb{R}\) such that
\[
x\bigl(\log\lambda_i-\log x+1\bigr)\le C_i-2\log x
\qquad\text{for all }x>0.
\]
Therefore
\[
\Psi_n(x')
\le
C-2\sum_{i=1}^{N-1}\log x_i
\]
for some constant \(C\) independent of \(x'\). Exponentiating, we obtain
\[
\exp\!\bigl(\Psi_n(x')\bigr)
\le
e^C\prod_{i=1}^{N-1} x_i^{-2}.
\]
Since
\[
\int_{D_n}\prod_{i=1}^{N-1} x_i^{-2}\,dx'
=
\prod_{i=1}^{N-1}\int_{a_n}^{\infty} x_i^{-2}\,dx_i
<\infty,
\]
it follows that
\[
\int_{D_n}\exp\!\bigl(\Psi_n(x')\bigr)\,dx'<\infty.
\]
Thus \(K_n\) converges absolutely.

\medskip
\noindent
(ii) Separation from the minimizer. Let \(k^{\prime *}\) be the unique interior minimizer
of \(f_n\), and fix \(\varepsilon>0\). Set
\[
\overline D_n:=[a_n,\infty)^{N-1},
\qquad
A_{n,\varepsilon}
:=
\{x'\in \overline D_n:\|x'-k^{\prime *}\|\ge \varepsilon\},
\]
and
\[
\rho_n(\varepsilon)
:=
\inf\{f_n(x')-f_n(k^{\prime *}):x'\in D_n,\ \|x'-k^{\prime *}\|\ge \varepsilon\}.
\]

The function \(f_n\) extends continuously from \(D_n\) to \(\overline D_n=[a_n,\infty)^{N-1}\).
Choose any point \(z_{n,\varepsilon}\in A_{n,\varepsilon}\). By part (i), we have
\[
f_n(x')\to\infty
\qquad\text{as }\max_{1\le i\le N-1}x_i\to\infty.
\]
Hence there exists \(R>a_n\) such that
\[
f_n(x')>f_n(z_{n,\varepsilon})
\qquad\text{whenever }\max_{1\le i\le N-1}x_i>R.
\]
Therefore
\[
\inf_{A_{n,\varepsilon}} f_n
=
\inf_{A_{n,\varepsilon}\cap K_{n,R}} f_n,
\qquad
K_{n,R}:=[a_n,R]^{N-1}.
\]
Since \(f_n\) is continuous on the compact set \(K_{n,R}\), and also it attains its minimum on the compact set
\(A_{n,\varepsilon}\cap K_{n,R}\); denote this minimum by \(m_{n,\varepsilon}\).

Because \(k^{\prime *}\) is the unique minimizer of \(f_n\) in \(D_n\), and every point of
\(A_{n,\varepsilon}\) stays at distance at least \(\varepsilon\) from \(k^{\prime *}\), we have
\[
m_{n,\varepsilon}>f_n(k^{\prime *}).
\]
Therefore
\[
\rho_n(\varepsilon)
=
m_{n,\varepsilon}-f_n(k^{\prime *})
>0.
\]
This verifies Wong's condition~(ii).

(iii) Positive-definite Hessian at the minimizer. Let
\[
y:=\frac{n}{x_1\cdots x_{N-1}},
\qquad
\ell(x'):=\log\frac{y}{\lambda_N}.
\]
Since
\[
f_n(x')=-\frac{\Psi_n(x')}{n^{1/N}},
\]
it is enough to show that the matrix
\[
A
=
\left(
\frac{\partial^2 f_n}{\partial x_i\partial x_j}(k^{\prime *})
\right)_{1\le i,j\le N-1}
\]
is positive definite.

A direct computation gives, for \(1\le i\le N-1\),
\[
\frac{\partial \Psi_n}{\partial x_i}(x')
=
\log\lambda_i-\log x_i+\frac{y}{x_i}\,\ell(x'),
\]
and hence
\[
\frac{\partial^2(-\Psi_n)}{\partial x_i^2}(x')
=
\frac1{x_i}
+
\frac{y}{x_i^2}\bigl(1+2\ell(x')\bigr),
\]
while for \(i\neq j\),
\[
\frac{\partial^2(-\Psi_n)}{\partial x_i\partial x_j}(x')
=
\frac{y}{x_i x_j}\bigl(1+\ell(x')\bigr).
\]

Now evaluate at \(x'=k^{\prime *}\), and write
\[
y^*:=\frac{n}{k_1^*\cdots k_{N-1}^*},
\qquad
\ell^*:=\log\frac{y^*}{\lambda_N}.
\]
We already know that
\[
k_i^*\sim n^{1/N},
\]
and therefore
\[
y^*:=\frac{n}{k_1^*\cdots k_{N-1}^*}\sim n^{1/N}.
\]
In particular, for sufficiently large \(n\),
\[
\ell^*:=\log\frac{y^*}{\lambda_N}>0.
\]

Now let \(u=(u_1,\dots,u_{N-1})\in\mathbb{R}^{N-1}\). Since
\[
A=\frac{1}{n^{1/N}}
\left(
\frac{\partial^2(-\Psi_n)}{\partial x_i\partial x_j}(k^{\prime *})
\right)_{1\le i,j\le N-1},
\]
we have
\[
u^\top A u
=
\frac{1}{n^{1/N}}
\left[
\sum_{i=1}^{N-1}
\left(
\frac{1}{k_i^*}
+
\frac{y^*}{(k_i^*)^2}(1+2\ell^*)
\right)u_i^2
+
2\sum_{1\le i<j\le N-1}
\frac{y^*}{k_i^*k_j^*}(1+\ell^*)u_i u_j
\right].
\]
Factoring out \(y^*\) from the remaining part, we get
\[
u^\top A u
=
\frac{1}{n^{1/N}}
\left[
\sum_{i=1}^{N-1}\frac{u_i^2}{k_i^*}
+
y^*
\left(
(1+2\ell^*)\sum_{i=1}^{N-1}\frac{u_i^2}{(k_i^*)^2}
+
2(1+\ell^*)\sum_{1\le i<j\le N-1}\frac{u_i u_j}{k_i^*k_j^*}
\right)
\right].
\]

Now set
\[
z_i:=\frac{u_i}{k_i^*}, \qquad i=1,\dots,N-1.
\]
Then the bracket becomes
\[
(1+2\ell^*)\sum_{i=1}^{N-1} z_i^2
+
2(1+\ell^*)\sum_{1\le i<j\le N-1} z_i z_j.
\]
Using the identity
\[
\left(\sum_{i=1}^{N-1} z_i\right)^2
=
\sum_{i=1}^{N-1} z_i^2
+
2\sum_{1\le i<j\le N-1} z_i z_j,
\]
we obtain
\[
(1+2\ell^*)\sum_{i=1}^{N-1} z_i^2
+
2(1+\ell^*)\sum_{1\le i<j\le N-1} z_i z_j
=
\ell^*\sum_{i=1}^{N-1} z_i^2
+
(1+\ell^*)\left(\sum_{i=1}^{N-1} z_i\right)^2.
\]
Substituting back \(z_i=u_i/k_i^*\), this gives
\[
u^\top A u
=
\frac{1}{n^{1/N}}
\left[
\sum_{i=1}^{N-1}\frac{u_i^2}{k_i^*}
+
y^*
\left(
\ell^*\sum_{i=1}^{N-1}\frac{u_i^2}{(k_i^*)^2}
+
(1+\ell^*)\left(\sum_{i=1}^{N-1}\frac{u_i}{k_i^*}\right)^2
\right)
\right].
\]

Finally, we have
\[
u^\top A u>0
\qquad\text{for all }u\neq 0.
\]
Therefore \(A\) is positive definite. This verifies Wong's condition~(iii).

\medskip
\noindent
It remains to compute \(\det A\). Since
\[
k_i^*=n^{1/N}(1+o(1)),
\qquad
y^*=n^{1/N}(1+o(1)),
\]
we have
\[
\ell^*
=
\log\frac{y^*}{\lambda_N}
=
\frac1N\log n + O(1).
\]
Hence, for each \(1\le i\le N-1\),
\[
A_{ii}
=
\frac1{n^{1/N}}
\left(
\frac1{k_i^*}
+
\frac{y^*}{(k_i^*)^2}(1+2\ell^*)
\right)
=
\frac{2(1+\ell^*)}{n^{2/N}}(1+o(1)),
\]
while for \(i\ne j\),
\[
A_{ij}
=
\frac1{n^{1/N}}
\frac{y^*}{k_i^*k_j^*}(1+\ell^*)
=
\frac{1+\ell^*}{n^{2/N}}(1+o(1)).
\]
Equivalently,
\[
A
=
\frac{1+\ell^*}{n^{2/N}}
\bigl(I_{N-1}+\mathbf 1\mathbf 1^\top\bigr)
\bigl(1+o(1)\bigr),
\]
where \(\mathbf 1=(1,\dots,1)^\top\in\mathbb R^{N-1}\).

Hence
\[
\det A
=
\left(\frac{1+\ell^*}{n^{2/N}}\right)^{N-1}
\det\!\bigl(I_{N-1}+\mathbf 1\mathbf 1^\top\bigr)
\bigl(1+o(1)\bigr).
\]
Since
\[
\det\!\bigl(I_{N-1}+\mathbf 1\mathbf 1^\top\bigr)=N,
\]
it follows that
\[
\det A
=
N\left(\frac{1+\ell^*}{n^{2/N}}\right)^{N-1}(1+o(1)).
\]
Finally, since
\[
\ell^*
=
\log\frac{y^*}{\lambda_N}
=
\frac1N\log n+O(1),
\]
we have
\[
1+\ell^*
=
\frac1N\log n\,(1+o(1)),
\]
and therefore
\[
\det A
=
N\left(\frac{\log n}{N\,n^{2/N}}\right)^{N-1}(1+o(1))
=
N^{-(N-2)}
\frac{(\log n)^{N-1}}{n^{2(N-1)/N}}
(1+o(1)).
\]

Next,
\[
g(k^{\prime *})=1,
\qquad
\left(\frac{2\pi}{\lambda}\right)^{(N-1)/2}
=
(2\pi)^{(N-1)/2}n^{-\frac{N-1}{2N}},
\]
and
\[
(\det A)^{-1/2}
=
N^{\frac{N-2}{2}}
\frac{n^{\frac{N-1}{N}}}{(\log n)^{\frac{N-1}{2}}}
(1+o(1)).
\]
Hence
\[
\left(\frac{2\pi}{\lambda}\right)^{(N-1)/2}
g(k^{\prime *})(\det A)^{-1/2}
=
(2\pi)^{(N-1)/2}
N^{\frac{N-2}{2}}
(\log n)^{-\frac{N-1}{2}}
n^{\frac{N-1}{2N}}
(1+o(1)).
\]

and therefore
\[
\Psi_n(k^{\prime *})
=
-\sum_{i=1}^N \lambda_i
+
\sum_{i=1}^{N-1}
n^{1/N}\left(\log\lambda_i-\frac1N\log n+1\right)
+
n^{1/N}\left(\log\lambda_N-\frac1N\log n+1\right).
\]
Thus
\[
\Psi_n(k^{\prime *})
=
-n^{1/N}\log n
+
n^{1/N}\left(N+\log\prod_{i=1}^N\lambda_i\right)
-
\sum_{i=1}^N\lambda_i.
\]

According to the Laplace formula at the beginning of this subsection,
\begin{align*}
K_n
&=
(2\pi)^{(N-1)/2}
N^{\frac{N-2}{2}}
(\log n)^{-\frac{N-1}{2}}
n^{\frac{N-1}{2N}}
\\
&\quad\times
\exp\!\left(
-n^{1/N}\log n
+
n^{1/N}\left(N+\log\prod_{i=1}^N\lambda_i\right)
-
\sum_{i=1}^N\lambda_i
\right).
\end{align*}

Recalling that
\[
I_n=\exp\!\bigl(O(\log n)\bigr)\,K_n,
\]
we obtain
\begin{align*}
I_n
&=
(2\pi)^{(N-1)/2}
N^{\frac{N-2}{2}}
(\log n)^{-\frac{N-1}{2}}
n^{\frac{N-1}{2N}}
\\
&\quad\times
\exp\!\left(
-n^{1/N}\log n
+
n^{1/N}\left(N+\log\prod_{i=1}^N\lambda_i\right)
-
\sum_{i=1}^N\lambda_i
+
O(\log n)
\right).
\end{align*}

Consequently,
\begin{align*}
\mathcal{P}_{C,n}
&=
(2\pi)^{(N-1)/2}
N^{\frac{N-2}{2}}
(\log n)^{-\frac{N-1}{2}}
n^{\frac{N-1}{2N}}
\\
&\quad\times
\exp\!\left(
-n^{1/N}\log n
+
n^{1/N}\left(N+\log\prod_{i=1}^N\lambda_i\right)
-
\sum_{i=1}^N\lambda_i
+
O(\log n)
\right).
\end{align*}

Since \(\mathcal{P}_n^{(N)}\sim \mathcal{P}_{C,n}\), this proves the claimed asymptotic formula.

\section{Auxiliary statements}

Suppose that r.v.s $\{\xi_1,\xi_2,\ldots,\xi_N\}$ from Theorem \ref{t1} are distributed according to Poisson's laws with parameters $\{\lambda_1,\lambda_2,\ldots,\lambda_N\}$. Our goal is to estimate the probability $\mathcal{P}^{(N)}_n$. In the lemma below, we select a set of indices in the expression for the probability 
$$
\mathcal{P}_n^{(N)}=\sum_{k_1k_2\ldots k_N\geqslant n}{\rm e}^{-\sum_{i=1}^N \lambda_i}\prod_{i=1}^N\frac{\lambda_i^{k_i}}{k_i!}\,,
$$
the elements of which have a greater influence on the asymptotics of this probability.
\begin{Lemma}\label{lem:balanced-regime-poisson}
Let $\{\xi_1,\xi_2,\ldots,\xi_N\}$ be a collection of independent Poisson random variables with positive parameters
$\{\lambda_1,\lambda_2,\ldots,\lambda_N\}$, and let $\mathcal{P}_n^{(N)}$ be the tail probability defined in Theorem \ref{t1}.
For sequence
$
a_n={n^{1/N}}/{\log n}
$
let us denote two index sets 
\[
\mathcal{A}_n
=
\bigg\{
\mathbf{k}=(k_1,k_2,\dots,k_N):
\prod_{j=1}^N k_j\geqslant n,\ k_i\leqslant a_n\ \text{for some}\  i\in\{1,2,\ldots,N\}
\bigg\},
\]
and
\[
\mathcal{B}_n
=
\bigg\{
\mathbf{k}=(k_1,k_2,\dots,k_N):
\prod_{j=1}^N k_j\geqslant n,\  k_i>a_n\ \text{for all}\ i\in\{1,2,\ldots,N\}
\bigg\}.
\]
Then  probabilities 
\[
\mathcal{P}_{{A},n}
:=
{\rm e}^{-\sum_{i=1}^N\lambda_i}
\sum_{\mathbf{k}\in \mathcal{A}_n}
\prod_{i=1}^N \frac{\lambda_i^{k_i}}{k_i!},
\qquad
\mathcal{P}_{B,n}
:=
{\rm e}^{-\sum_{i=1}^N\lambda_i}
\sum_{\mathbf{k}\in \mathcal{B}_n}
\prod_{i=1}^N \frac{\lambda_i^{k_i}}{k_i!}
\]
satisfy the following relationship
\begin{align}\label{ar1}
\frac{\mathcal{P}_{A,n}}{\mathcal{P}_{B,n}} \mathop{\rightarrow}\limits_{n\rightarrow\infty} 0,
\end{align}
implying that 
\begin{align}\label{ar2}
\mathcal{P}^{(N)}_n \mathop{\sim}\limits_{n\rightarrow\infty} \mathcal{P}_{B,n}.
\end{align}
\end{Lemma}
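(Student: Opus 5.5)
The plan is to prove \eqref{ar1} by comparing crude upper and lower bounds on the two probabilities. Relation \eqref{ar2} then follows at once, since $\mathcal{P}^{(N)}_n=\mathcal{P}_{A,n}+\mathcal{P}_{B,n}$ because $\mathcal{A}_n$ and $\mathcal{B}_n$ partition the index set $\{\prod k_j\geqslant n\}$.

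\emph{Lower bound for $\mathcal{P}_{B,n}$.} Take a single lattice point $\mathbf{k}^\circ$ with all coordinates equal to $m=\lceil n^{1/N}\rceil$, or with $N-1$ coordinates equal to $\lceil n^{1/N}\rceil$ and the last equal to the smallest integer making the product $\geqslant n$. Then $\mathbf{k}^\circ\in\mathcal{B}_n$ for large $n$. By Stirling's formula,
$$
\log \mathcal{P}_{B,n}\geqslant -\sum_i\lambda_i+\sum_i\bigl(k^\circ_i\log\lambda_i-k^\circ_i\log k^\circ_i+k^\circ_i\bigr)-O(\log n).
$$
Since $k_i^\circ=n^{1/N}+O(1)$, this gives $\log\mathcal{P}_{B,n}\geqslant -n^{1/N}\log n+O(n^{1/N})$.

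\emph{Upper bound for $\mathcal{P}_{A,n}$.} Fix $\mathbf{k}\in\mathcal{A}_n$ and, by symmetry up to the parameters, suppose $k_1\leqslant a_n$. Then $\prod_{j\geqslant2}k_j\geqslant n/a_n$. Hence $\max_{j\geqslant 2}k_j\geqslant (n/a_n)^{1/(N-1)}=n^{1/N}(\log n)^{1/(N-1)}=:b_n$. For $N=1$ the set $\mathcal{A}_n$ is empty for large $n$ (since $k_1\geqslant n>a_n$), so assume $N\geqslant2$. Summing over the remaining coordinates freely (each marginal sum is at most $1$), we get
$$
\mathcal{P}_{A,n}\leqslant \sum_{j=1}^N\sum_{i\neq j}\mathbb{P}(\xi_i\geqslant b_n)\leqslant N^2\max_i\mathbb{P}(\xi_i\geqslant b_n).
$$
The standard Poisson tail bound $\mathbb{P}(\xi\geqslant b)\leqslant (e\lambda/b)^b$ (Chernoff), or a direct Stirling estimate of the leading term plus a geometric tail, gives
$$
\log\mathcal{P}_{A,n}\leqslant -b_n\log b_n+O(b_n)=-n^{1/N}(\log n)^{1/(N-1)}\Bigl(\tfrac1N\log n\Bigr)(1+o(1)).
$$

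\emph{Comparison.} We have $\log\mathcal{P}_{A,n}-\log\mathcal{P}_{B,n}\leqslant -n^{1/N}\log n\,\bigl(\tfrac1N(\log n)^{1/(N-1)}(1+o(1))-1\bigr)+O(n^{1/N})\to-\infty$, because $(\log n)^{1/(N-1)}\to\infty$. This proves \eqref{ar1}.

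The main obstacle is making the $\mathcal{A}_n$ bound sharp enough. The naive idea that a small $k_1$ is only mildly costly is wrong. What makes it work is the pigeonhole step: a coordinate $\leqslant a_n$ forces some other coordinate to be of order at least $n^{1/N}(\log n)^{1/(N-1)}$. That coordinate costs a factor of order $e^{-\frac1N n^{1/N}(\log n)^{1+1/(N-1)}}$, which beats the balanced cost $e^{-n^{1/N}\log n}$ by a diverging power of $\log n$. Care is needed with the constant $\tfrac1N$ in $\log b_n\sim\frac1N\log n$. This constant is harmless only because the extra factor $(\log n)^{1/(N-1)}$ is unbounded. Note also that the case $k_i=0$ lies outside $\prod k_j\geqslant n$, so no zero coordinates arise.
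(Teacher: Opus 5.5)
Your proposal is correct and follows the paper's proof essentially step for step. It uses the pigeonhole observation that a coordinate $\leqslant a_n$ forces some coordinate $\geqslant b_n=n^{1/N}(\log n)^{1/(N-1)}$, a union bound with the Poisson Chernoff estimate giving $\log\mathcal{P}_{A,n}\leqslant -\frac1N n^{1/N}(\log n)^{N/(N-1)}(1+o(1))$, and the single-point lower bound at $(\lceil n^{1/N}\rceil,\dots,\lceil n^{1/N}\rceil)$ giving $\log\mathcal{P}_{B,n}\geqslant -n^{1/N}\log n+O(n^{1/N})$. Your separate remark that $\mathcal{A}_n=\varnothing$ for large $n$ when $N=1$, where $b_n$ is undefined, covers a case the paper omits.
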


\begin{proof}
The identity
\[
\mathcal{P}_n^{(N)}=\mathcal{P}_{A,n}+\mathcal{P}_{B,n}
\]
is immediate, since $\mathcal{A}_n\cap\mathcal{B}_n=\varnothing$ and
\[
\Bigl\{\mathbf{k}:\prod_{i=1}^N k_i\ge n\Bigr\}
=
\mathcal{A}_n\cup\mathcal{B}_n.
\]
 Hence, to prove the relation \eqref{ar2}, it is sufficient to prove the equality \eqref{ar1}.
 
 $\bullet$ At first, for this, we estimate the unbalanced contribution $\mathcal{P}_{A,n}$ from above. 
Assume that $\mathbf{k}\in\mathcal{A}_n$. Then for some index $i\in\{1,2,\ldots,N\}$ we have $k_i\leqslant a_n$ and
\[
\prod_{j=1}^N k_j \geqslant n.
\]
If, in addition, all coordinates of $\mathbf{k}$ satisfy 
$$
k_j<b_n:=\left(\frac{n}{a_n}\right)^{1/(N-1)}
= n^{1/N}(\log n)^{1/(N-1)},
$$
then
\[
\prod_{j=1}^N k_j
<
a_n\, b_n^{\,N-1}
=
n,
\]
which is impossible. Hence every $\mathbf{k}\in\mathcal{A}_n$ must satisfy
\[
\max_{1\le j\leqslant N} k_j \geqslant b_n.
\]
Therefore,
\begin{align}\label{ar3}
\mathcal{P}_{A,n}
&=\mathbb{P}\big((\xi_1,\xi_2,\ldots,\xi_N)\in \mathcal{A}_n\big)
\leqslant
\mathbb{P}\Bigl(\max_{1\leqslant j\leqslant N}\xi_j\geqslant b_n\Bigr)\nonumber\\ &=
\mathbb{P}\Bigl(\bigcup_{j=1}^N\{\xi_j\geqslant b_n\}\Bigr)\leqslant
\sum_{j=1}^N \mathbb{P}(\xi_j\geqslant b_n).
\end{align}

Let us suppose temporally that r.v.  $X$ is distributed according to the Poisson law with parameter $\lambda>0$. Due to the Chernoff bound (see, for instance, pp. 97-98 in \cite{mu-2005}), we have that 
\[
\mathbb{P}(X\geqslant u)
\leqslant
\exp\left\{-u\log\frac{u}{\lambda}+u-\lambda\right\}
\]
if $u>\lambda$. 
Applying this bound with $u=b_n$ for sufficiently large $n$, we obtain
\[
\mathbb{P}(\xi_j\geqslant b_n)
\leqslant
\exp \Big\{-b_n\log\frac{b_n}{\lambda_j}+b_n-\lambda_j\Big\}, j\in\{1,2,\ldots,N\}.
\]
Since
\[
\log b_n
=
\frac1N\log n+\frac{1}{N-1}\log\log n,
\]
it follows that
\[
-b_n\log\frac{b_n}{\lambda_j}+b_n-\lambda_j
=
-\frac1N\,n^{1/N}(\log n)^{N/(N-1)}
+O\Bigl(n^{1/N}(\log n)^{1/(N-1)}\log\log n\Bigr).
\]
for each fixed $j\in\{1,2,\ldots, N\}$. Consequently, the estimate \eqref{ar3} implies that  
\begin{align}\label{ar4}
\mathcal{P}_{A,n}
\leqslant
N\exp\Bigl\{-c_1\,n^{1/N}(\log n)^{N/(N-1)}\Bigr\}
\end{align}
for sufficiently large $n$ and  some quantity $c_1$ not depending on $n$.

$\bullet$ Next we derive a lower bound for the balanced contribution $\mathcal{P}_{B,n}$.
Set
\[
r_n:=\big\lceil n^{1/N}\big\rceil.
\]
Then $r_n^N\geqslant n$, and since
$
a_n=o\big(n^{1/N}\big),
$
we have $r_n>a_n$ for all sufficiently large $n$. Hence
$
(r_n,\dots,r_n)\in\mathcal{B}_n,
$
and, therefore,
\[
\mathcal{P}_{B,n}
\geqslant 
\mathbb{P}\big(\xi_1=r_n,\xi_2=r_n,\ldots,\xi_N=r_n\big)
=
{\rm e}^{-\sum_{i=1}^N\lambda_i}
\prod_{i=1}^N \frac{\lambda_i^{r_n}}{r_n!}.
\]
According to Stirling's formula
\[
\log(r_n!)
=
r_n\log r_n-r_n+O(\log r_n),
\quad n\to\infty.
\]
Consequently,
\begin{align*}
\log \mathcal{P}_{B,n}
&\geqslant
-\sum_{i=1}^N\lambda_i
+
r_n\sum_{i=1}^N \log\lambda_i
-
N\log(r_n!)\nonumber \\
&=
-\sum_{i=1}^N\lambda_i
+
r_n\sum_{i=1}^N \log\lambda_i
-
N r_n\log r_n
+
N r_n
+
O(\log n).
\end{align*}

Since
$
n^{1/N}\leqslant r_n<n^{1/N}+1,
$
and 
$
\log(1+x)\leqslant x,\ x\geqslant 0,
$
we have
\begin{align*}
N r_n\log r_n
=
n^{1/N}\log n+O(\log n).
\end{align*}
Substituting this into the previous estimate for $\log \mathcal{P}_{B,n}$, we obtain
\[
\log \mathcal{P}_{B,n}
\geqslant
-\,n^{1/N}\log n + O(n^{1/N}).
\]
Equivalently, for  sufficiently large $n$,
\[
\mathcal{P}_{B,n}
\geqslant
\exp\Bigl\{-n^{1/N}\log n-c_2n^{1/N}\Bigr\}
\]
where $c_2$ is some positive quantity not depending on $n$.
Combining the upper bound \eqref{ar4} for $\mathcal{P}_{A,n}$ with the derived lower bound for $\mathcal{P}_{B,n}$, we get
\[
\frac{\mathcal{P}_{A,n}}{\mathcal{P}_{B,n}}
\leqslant
N\exp\!\Bigl\{
-c_1\,n^{1/N}(\log n)^{N/(N-1)}
+n^{1/N}\log n
+c_2n^{1/N}
\Bigr\}
\]
for sufficiently large $n$. The last estimate implies the relation \eqref{ar1}, which completes the proof of the lemma. 
\end{proof}

Following Lemma \ref{lem:balanced-regime-poisson}, in the next lemma below we show that inside the balanced region $\mathcal{B}_n$, the main contribution comes from the points that satisfy $\prod_{i=1}^N k_i\geqslant n$ for the first time, while the contribution from the remaining points in $\mathcal{B}_n$ is asymptotically negligible.

\begin{Lemma}\label{lem:first-layer-dominance} Let $\{\xi_1,\xi_2,\ldots,\xi_N\}$ be a collection of independent Poisson random variables with positive parameters
$\{\lambda_1,\lambda_2,\ldots,\lambda_N\}$.  Let  $\mathcal{B}_n $ be the same set of indexes as in Lemma \ref{lem:balanced-regime-poisson} with the same sequence $a_n$, and let $\mathcal{P}_{B,n}$ be the probability defined in Lemma   \ref{lem:balanced-regime-poisson}.
Define the first balanced layer by
\begin{align}\label{po}
\mathcal{C}_n
:=
\left\{
\mathbf{k}=(k_1,k_2,\dots,k_N)\in \mathcal{B}_n:
k_1\ldots k_{N-1}(k_N-1)< n
\right\},
\end{align}
and  the remaining balanced tail by
\[
\mathcal{D}_n
:=
\left\{
\mathbf{k}=(k_1,k_2, \dots,k_N)\in \mathcal{B}_n:
k_1k_2\ldots k_{N-1}(k_N-1)\geqslant n
\right\}.
\]
If
\[
\mathcal{P}_{C,n}
:=
{\rm e}^{-\sum_{i=1}^N\lambda_i}
\sum_{\mathbf{k}\in\mathcal{C}_n}
\prod_{i=1}^N \frac{\lambda_i^{k_i}}{k_i!},
\quad
\mathcal{P}_{D,n}
:=
{\rm e}^{-\sum_{i=1}^N\lambda_i}
\sum_{\mathbf{k}\in \mathcal{D}_n}
\prod_{i=1}^N \frac{\lambda_i^{k_i}}{k_i!},
\]
then
\[
\frac{\mathcal{P}_{D,n}}{\mathcal{P}_{C,n}}
=
O\left(\frac{\log n}{n^{1/N}}\right),
\]
or equivalently 
\[
\mathcal{P}_{B,n}
=
\mathcal{P}_{C,n}
\left(
1+O\left(\frac{\log n}{n^{1/N}}\right)
\right),
\ {\text{as}}\ n\to\infty.
\]
\end{Lemma}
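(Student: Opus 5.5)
The plan is to compare $\mathcal{D}_n$ and $\mathcal{C}_n$ fibre by fibre in the last coordinate, using the fact that on $\mathcal{B}_n$ the last coordinate exceeds $a_n$. In that range the Poisson masses of $\xi_N$ decay geometrically with ratio at most $\lambda_N/a_n = O(\log n/n^{1/N})$.

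Fix $k'=(k_1,\dots,k_{N-1})$ with $k_i>a_n$ and write $r=k_1\cdots k_{N-1}$. The admissible values of $k_N$ in $\mathcal{B}_n$ are exactly $k_N\geqslant L(k'):=\max\{\lfloor a_n\rfloor+1,\lceil n/r\rceil\}$. Such a $k_N$ lies in $\mathcal{C}_n$ iff additionally $r(k_N-1)<n$. I split into two cases.

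\emph{Regular fibres}, where $\lceil n/r\rceil\geqslant\lfloor a_n\rfloor+1$. Here $L=\lceil n/r\rceil$, so the $\mathcal{C}_n$-part of the fibre is the single point $k_N=L$ and the $\mathcal{D}_n$-part is $\{k_N\geqslant L+1\}$. Write $p(k)=\lambda_N^{k}/k!$. Since $p(k+1)/p(k)=\lambda_N/(k+1)\leqslant \lambda_N/a_n=:q_n<1/2$ for large $n$ and all $k\geqslant L>a_n$, we get
\[
\sum_{k_N\geqslant L+1}p(k_N)\leqslant p(L)\sum_{m\geqslant1}q_n^m\leqslant 2q_n\,p(L)=O\!\left(\frac{\log n}{n^{1/N}}\right)p(L).
\]
Multiplying by $\prod_{i<N}\lambda_i^{k_i}/k_i!$ and summing over all regular $k'$ bounds their $\mathcal{D}_n$-contribution by $O(\log n/n^{1/N})\,\mathcal{P}_{C,n}$.

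\emph{Degenerate fibres}, where $n/r\leqslant\lfloor a_n\rfloor$. Then the $\mathcal{C}_n$-part of the fibre may be empty, so the fibrewise comparison is unavailable, and I expect this to be the main obstacle. The fix is to note that $r\geqslant n/a_n$, so as in the proof of Lemma~\ref{lem:balanced-regime-poisson}, $\max_{i<N}k_i\geqslant (n/a_n)^{1/(N-1)}=b_n$. Hence the total mass of $\mathcal{D}_n$ on degenerate fibres is at most $\sum_j\mathbb{P}(\xi_j\geqslant b_n)\leqslant N\exp\{-c_1n^{1/N}(\log n)^{N/(N-1)}\}$ by the Chernoff estimate \eqref{ar4}.

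To compare this with $\mathcal{P}_{C,n}$, I need a lower bound on $\mathcal{P}_{C,n}$ itself, not merely on $\mathcal{P}_{B,n}$. I take $r_n=\lceil n^{1/N}\rceil$ and the point $(r_n,\dots,r_n,\lceil n/r_n^{N-1}\rceil)$. Since $r_n^{N-1}\geqslant n^{(N-1)/N}$, its last coordinate satisfies $\lceil n/r_n^{N-1}\rceil\leqslant\lceil n^{1/N}\rceil$ and is $\sim n^{1/N}>a_n$. Its product is $\geqslant n$, and lowering the last coordinate by one drops the product below $n$, so the point lies in $\mathcal{C}_n$. The same Stirling computation as in Lemma~\ref{lem:balanced-regime-poisson} gives $\mathcal{P}_{C,n}\geqslant\exp\{-n^{1/N}\log n-c_2n^{1/N}\}$. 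The degenerate contribution is therefore $o(n^{-1})\,\mathcal{P}_{C,n}$, which is certainly $O(\log n/n^{1/N})\,\mathcal{P}_{C,n}$.

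Adding the two cases gives $\mathcal{P}_{D,n}/\mathcal{P}_{C,n}=O(\log n/n^{1/N})$. The equivalent form $\mathcal{P}_{B,n}=\mathcal{P}_{C,n}(1+O(\log n/n^{1/N}))$ follows from the disjoint decomposition $\mathcal{B}_n=\mathcal{C}_n\cup\mathcal{D}_n$.
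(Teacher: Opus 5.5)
Your proof is correct. On the regular fibres it follows the paper's argument: the only $\mathcal{C}_n$-point of the fibre over $k'$ is $k_N=L(k')$, and the rest of the fibre is dominated by a geometric series with ratio at most $\lambda_N/a_n$.

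Where you differ is the degenerate case, and there your proof is more complete than the paper's. The paper asserts that for every fixed $k'$ the contribution to $\mathcal{P}_{C,n}$ is the first term $\ell=k_N^*$. But if $\lceil n/r\rceil\leqslant\lfloor a_n\rfloor$, then $k_N^*=\lfloor a_n\rfloor+1$ and $r(k_N-1)\geqslant r\lfloor a_n\rfloor\geqslant n$ for every admissible $k_N$, so the whole fibre lies in $\mathcal{D}_n$. Summing first terms over all $k'$, as the paper does, therefore yields $\mathcal{P}_{C,n}$ plus the first terms of these fibres. The paper never shows that this surplus is $O(\log n/n^{1/N})\,\mathcal{P}_{C,n}$.

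Your second case supplies exactly that. The bound $\max_{i<N}k_i\geqslant b_n$ brings in the Chernoff estimate \eqref{ar4}. The explicit point $(r_n,\dots,r_n,\lceil n/r_n^{N-1}\rceil)\in\mathcal{C}_n$ gives a lower bound on $\mathcal{P}_{C,n}$ itself. Together these make the degenerate mass super-polynomially small relative to $\mathcal{P}_{C,n}$, which is more than enough.

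The cost is that the lemma now depends on the estimates of Lemma~\ref{lem:balanced-regime-poisson}, which use $b_n$ and hence need $N\geqslant 2$. This is harmless: for $N=1$ one has $r=1$ and $n>\lfloor a_n\rfloor$, so no degenerate fibres occur.
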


\begin{Remark}
    Due to the assertions of Lemma \ref{lem:balanced-regime-poisson} and Lemma \ref{lem:first-layer-dominance} we have that 
$$
\mathbb{P}\big(\xi_1\xi_2\ldots\xi_N\geqslant n\big)=\mathcal{P}^{(N)}_n\mathop{\sim}\limits_{n\rightarrow\infty}\mathcal{P}_{C,n},
$$
where probability $\mathcal{P}_{C,n}$ is defined in Lemma  \ref{lem:first-layer-dominance}.
\end{Remark}

\begin{proof}
Fix integers $k_1>a_n, k_2>a_n,\dots,k_{N-1}>a_n$, and denote temporally 
\[
r:=k_1\cdots k_{N-1}.
\]
Let us  determine the smallest integer \(k^*_N\) such that
\[
(k_1,\dots,k_{N-1},k^*_N)\in \mathcal{B}_n.
\]
By the definition of \(B_n\), this requires two conditions:
\[
rk^*_N\geqslant  n
\quad\text{and}\quad
k^*_N>a_n.
\]
Since \(k^*_N\) must be an integer, the conditions imply that 
\[
k^*_N\geqslant \left\lceil \frac{n}{r}\right\rceil \quad \text{and}\quad
k^*_N\geqslant \lfloor a_n\rfloor+1.
\]
Therefore \(k^*_N\) must satisfy both lower bounds, and hence the smallest admissible integer is
\[
k^*_N=\max\left\{\lfloor a_n\rfloor+1,\left\lceil \frac{n}{r}\right\rceil\right\}.
\]
For such fixed choice of $k_1,k_2,\dots,k_{N-1}$, the contribution to the probability $\mathcal{P}_{B,n}$ from the last coordinate is
\[
{\rm e}^{-\lambda_N}\sum_{\ell\geqslant k_N^*}\frac{\lambda_N^\ell}{\ell!},
\]
while the contribution to $\mathcal{P}_{C,n}$ is just the first term
\[
{\rm e}^{-\lambda_N}\frac{\lambda_N^{k_N^*}}{k_N^*!}.
\]
For any $\ell\geqslant k_N^*$ 
\[
\frac{\lambda_N^{\ell+1}}{(\ell +1) !} \Big{/}\frac{\lambda_N^\ell}{\ell !}
=
\frac{\lambda_N}{\ell+1}
\leqslant
\frac{\lambda_N}{k_N^*+1}
\leqslant
\frac{\lambda_N}{\lfloor a_n\rfloor+2}
=
O\!\left(\frac{\log n}{n^{1/N}}\right).
\]
Therefore,
\[
{\rm e}^{-\lambda_N}\sum_{\ell\geqslant k_N^*} \frac{\lambda_N^\ell}{\ell !}
=
{\rm e}^{-\lambda_N}\frac{\lambda_N^{k_N^*}}{k_N^*!}\left(1+O\!\left(\frac{\log n}{n^{1/N}}\right)\right).
\]
 Multiplying this equality by the common factor
\[
{\rm e}^{-\sum_{i=1}^{N-1}\lambda_i}
\prod_{i=1}^{N-1}\frac{\lambda_i^{k_i}}{k_i!}
\]
and summing over all possible choices of indexes  \(k_1, \dots,k_{N-1}\), we obtain
\[
\mathcal{P}_{B,n}
=
\mathcal{P}_{C,n}
\left(
1+O\!\left(\frac{\log n}{n^{1/N}}\right)
\right).
\]
This finishes the proof of the lemma.
\end{proof}

\begin{Lemma}\label{lem:sum-integral}
Adopt the notation of Section~3.4, and define
\[
S_n
:=
\sum_{k':\,(k',L(k'))\in\mathcal{C}_n}
\exp\!\bigl(\Phi_n(k')\bigr),
\qquad
I_n
:=
\int_{(a_n,\infty)^{N-1}} \exp\!\bigl(\Phi_n(x')\bigr)\,dx'.
\]
Then
\[
S_n\sim I_n,
\qquad n\to\infty.
\]
\end{Lemma}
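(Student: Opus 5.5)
The plan is to compare $S_n$ and $I_n$ through a localization to a Gaussian window around the interior maximizer $k'^*$ of $\Phi_n$. The approximation is a Riemann-sum argument at lattice spacing $1$. It works because the natural scale of $\exp(\Phi_n)$ is much larger than $1$.

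\textbf{Scale and window.} The computation in part (iii) of the Laplace subsection applies equally to $\Phi_n$, since $\Phi_n$ and $\Psi_n$ differ only by the constant $-\frac N2\log(2\pi)-\frac12\log n$. It shows that the Hessian of $-\Phi_n$ at $k'^*$ has entries of order $(\log n)/n^{1/N}$. The Gaussian width is therefore $\sigma_n:=(n^{1/N}/\log n)^{1/2}\to\infty$. Fix $M_n\to\infty$ slowly, say $M_n=\log\log n$, and set
\[
U_n:=\{x':\|x'-k'^*\|\le M_n\sigma_n\}.
\]
Since $k_i'^*\sim n^{1/N}$ and $M_n\sigma_n=o(n^{1/N})$, the window $U_n$ lies well inside $(a_n,\infty)^{N-1}$. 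On $U_n$ we also have $x(x')\sim n^{1/N}$, so $L(x')=\lceil x(x')\rceil$ and every integer $k'\in U_n$ indexes a point of $\mathcal C_n$. Hence the constraint in the definition of $S_n$ is inactive on $U_n$.

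\textbf{Core estimate.} For an integer point $k'\in U_n$ and $x'\in k'+[0,1)^{N-1}$, Taylor's theorem gives
\[
|\Phi_n(x')-\Phi_n(k')|\le \sup_{U_n}\|\nabla\Phi_n\|\cdot\sqrt{N-1}.
\]
Because $\nabla\Phi_n(k'^*)=0$ and the Hessian is $O((\log n)/n^{1/N})$ uniformly on $U_n$, we get
\[
\sup_{U_n}\|\nabla\Phi_n\|=O\bigl(M_n\sigma_n(\log n)/n^{1/N}\bigr)=O\bigl(M_n\sqrt{\log n/n^{1/N}}\bigr)=o(1).
\]
The uniformity holds since $\ell=\log(y/\lambda_N)$ and all ratios $y/x_i$ stay $\asymp 1$ there. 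Consequently
\[
\sum_{k'\in U_n}e^{\Phi_n(k')}=(1+o(1))\int_{U_n}e^{\Phi_n},
\]
up to the cubes straddling $\partial U_n$. Those cubes carry Gaussian weight $e^{-cM_n^2}$ relative to the main term, and so are negligible.

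\textbf{Tails.} It remains to show that both $\int_{(a_n,\infty)^{N-1}\setminus U_n}e^{\Phi_n}$ and $\sum_{k'\notin U_n}e^{\Phi_n(k')}$ are $o\bigl(\int_{U_n}e^{\Phi_n}\bigr)$. Here $\int_{U_n}e^{\Phi_n}\asymp \sigma_n^{N-1}(\log n)^{\cdots}e^{\Phi_n(k'^*)}$.

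I expect this step to be the main obstacle. The difficulty is that the Hessian formula only gives local convexity of $-\Phi_n$, and the domain $(a_n,\infty)^{N-1}$ is unbounded. I would split the region into three parts.

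\emph{(a) Moderate region.} This is where all $x_i$ and $y$ lie in $[n^{1/N}/K,\,Kn^{1/N}]$. Here $\ell>0$, so the quadratic-form identity from part (iii) shows $-\Phi_n$ is convex. Convexity makes the decay along rays from $k'^*$ at least Gaussian up to $\partial U_n$ and linear beyond it. Together with the lattice-cube comparison, this gives a bound $e^{\Phi_n(k'^*)-cM_n^2}$ times a polynomial volume factor, which is negligible.

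\emph{(b) Far region.} This is where some $x_i$ or $y$ lies outside that range but $x_i>a_n$. I would bound $\Phi_n$ directly via the one-dimensional function $x\mapsto x(\log\lambda-\log x+1)$. This yields $\Phi_n\le\Phi_n(k'^*)-c\,n^{1/N}$ on that region, while the region has at most polynomial volume after cutting off very large $x_i$. The cut-off uses the bound $e^{\Psi_n}\le e^C\prod x_i^{-2}$ from part (i), which also controls the lattice sum.

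\emph{(c) Boundary branch.} This is the branch where $L(k')=\lfloor a_n\rfloor+1$, i.e.\ $y$ is near $a_n$. It lies inside region (b), so the boundary branch and the slight mismatch between the summation set and $(a_n,\infty)^{N-1}$ are both absorbed there.

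Combining the core estimate with the three tail bounds gives $S_n\sim I_n$.
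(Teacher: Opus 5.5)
Your local step is essentially the paper's. The paper also works in a window of radius $r_n=n^{1/(2N)}\log n$ about $k^{\prime *}$, which is $M_n\asymp(\log n)^{3/2}$ in your units. It shows $\sup\|\nabla\Phi_n\|=O\bigl((\log n)^2n^{-1/(2N)}\bigr)=o(1)$ there and reads the local sum as a Riemann sum. For the tails the paper offers a single sentence (``unique global maximum, hence exponentially smaller''). Your three-region analysis, including the boundary branch and the mismatch between the summation set and $(a_n,\infty)^{N-1}$, is therefore where you go beyond it. As written, though, two of its quantitative steps fail.

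\textbf{The cut-off in (b).} The bound $e^{\Psi_n}\le e^C\prod x_i^{-2}$ from part (i) cannot serve as the cut-off. It only makes the part with $\max_i x_i>R$ of order $R^{-1}a_n^{-(N-2)}$, while the main term is $\exp\bigl(-n^{1/N}\log n+O(n^{1/N})\bigr)$. You would need $R\ge\exp(n^{1/N}\log n)$, and then $[a_n,R]^{N-1}$ has exponentially large volume, so the uniform deficit $c\,n^{1/N}$ loses. Instead, keep the superexponential decay that part (i) throws away. Since $\varphi\le\lambda_N$, you have $e^{\Psi_n(x')}\le C\prod_{i<N}e^{h_i(x_i)}$ with $h_i(x)=x(\log\lambda_i-\log x+1)$. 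Hence the region $\{x_1>n\}$ contributes $O\bigl(\int_n^\infty e^{h_1}\bigr)=O(e^{-n})$, which is negligible for $N\ge2$, and the same holds for the lattice sum. After this cut at $x_i\le n$, your polynomial-volume argument goes through. The deficit itself also needs care. Bounding the largest of $x_1,\dots,x_{N-1},y$ by its one-dimensional function and the others by constants works if $K>N^{N-1}$, because that coordinate then exceeds $K^{1/(N-1)}n^{1/N}>Nn^{1/N}$. For smaller $K$ the constraint has to enter, e.g.\ via AM--GM.

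\textbf{The window size in (a).} With $M_n=\log\log n$, the factor $e^{-cM_n^2}$ decays more slowly than every $n^{-\varepsilon}$. Read literally, ``$e^{\Phi_n(k^{\prime *})-cM_n^2}$ times a polynomial volume factor'' is then not small against $\sigma_n^{N-1}e^{\Phi_n(k^{\prime *})}$. With the volume $\asymp n^{(N-1)/N}$ of region (a), the ratio is of order $n^{(N-1)/(2N)}(\log n)^{(N-1)/2}e^{-c(\log\log n)^2}\to\infty$. There are two ways to fix this.
\begin{itemize}
\item Actually integrate, and sum over lattice points, the linear decay $\Phi_n(k^{\prime *})-\Phi_n(x')\ge cM_n\|x'-k^{\prime *}\|/\sigma_n$ valid in region (a) outside $U_n$. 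This gives a relative error equal to $e^{-cM_n^2}$ times a power of $M_n$.
\item Take $M_n$ a power of $\log n$, as the paper effectively does. Your gradient bound $O\bigl(M_n\sqrt{\log n/n^{1/N}}\bigr)$ remains $o(1)$.
\end{itemize}
With these two repairs your argument is complete, and more complete than the paper's own.
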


\begin{proof}
Let \(k^{\prime *}=(k_1^*,\dots,k_{N-1}^*)\) be the maximizer of \(\Phi_n\), and choose
\(r_n\to\infty\) such that \(r_n=o(n^{1/N})\), for instance
\[
r_n=n^{1/(2N)}\log n.
\]
Define
\[
\mathcal{E}_n
:=
\left\{
x'\in (a_n,\infty)^{N-1}:
\max_{1\le i\le N-1}|x_i-k_i^*|\le r_n
\right\}.
\]
Write
\[
S_n=S_n^{\mathrm{loc}}+S_n^{\mathrm{rem}},
\qquad
I_n=I_n^{\mathrm{loc}}+I_n^{\mathrm{rem}},
\]
where the local parts are restricted to \(\mathcal{E}_n\), and the remainder parts to its complement.

Since \(\Phi_n\) has a unique global maximum at \(k^{\prime *}\), the contribution from
\((a_n,\infty)^{N-1}\setminus \mathcal{E}_n\) is exponentially smaller than
\(\exp(\Phi_n(k^{\prime *}))\). Hence
\[
S_n^{\mathrm{rem}}=o\!\left(S_n^{\mathrm{loc}}\right),
\qquad
I_n^{\mathrm{rem}}=o\!\left(I_n^{\mathrm{loc}}\right).
\]

It remains to compare the local parts. Fix \(1\le i\le N-1\), and set
\[
f(x'):=\frac{\partial \Phi_n}{\partial x_i}(x').
\]
Apply the first-order multivariate Taylor expansion at the point
\[
a=k^{\prime *}.
\]
Let
\[ 
\mathcal{F}_n\:=\left\{y'\in\mathbb R^{N-1}:\max_{1\le j\le N-1}|y'_j-k_j^*|\le r_n\right\}. \]
By definition of \(\mathcal{E}_n\), we have \(\mathcal{E}_n\subset \mathcal{F}_n\). Then
\[
f(x')
=
\sum_{|\alpha|\le 1}\frac{D^\alpha f(k^{\prime *})}{\alpha!}(x'-k^{\prime *})^\alpha
+
\sum_{|\beta|=2}R_{\beta}(x')(x'-k^{\prime *})^\beta,
\]
where
\[
|R_{\beta}(x')|
\le
\frac{1}{\beta!}\max_{|\alpha|=2}\max_{y'\in \mathcal{F}_n}|D^\alpha f(y')|.
\]
Since the terms with \(|\alpha|\le 1\) consist precisely of the case \(|\alpha|=0\) and the cases \(|\alpha|=1\), we may write

\[
f(x')
=
f(k^{\prime *})
+
\sum_{j=1}^{N-1}\frac{\partial f}{\partial x_j}(k^{\prime *})(x_j-k_j^*)
+
\sum_{|\beta|=2}R_{\beta}(x')(x'-k^{\prime *})^\beta.
\]
Substituting \(f(x')=\dfrac{\partial \Phi_n}{\partial x_i}(x')\), we obtain
\[
\frac{\partial \Phi_n}{\partial x_i}(x')
=
\frac{\partial \Phi_n}{\partial x_i}(k^{\prime *})
+
\sum_{j=1}^{N-1}
\frac{\partial^2\Phi_n}{\partial x_i\partial x_j}(k^{\prime *})(x_j-k_j^*)
+
\sum_{|\beta|=2}R_{\beta,i}(x')(x'-k^{\prime *})^\beta.
\]
Since \(\nabla\Phi_n(k^{\prime *})=0\), this simplifies to
\[
\frac{\partial \Phi_n}{\partial x_i}(x')
=
\sum_{j=1}^{N-1}
\frac{\partial^2\Phi_n}{\partial x_i\partial x_j}(k^{\prime *})(x_j-k_j^*)
+
\sum_{|\beta|=2}R_{\beta,i}(x')(x'-k^{\prime *})^\beta.
\]

Now let
\[
M_{i,n}:=\max_{|\alpha|=2}\max_{y'\in \mathcal{F}_n}\left|D^\alpha\!\left(\frac{\partial\Phi_n}{\partial x_i}\right)(y')\right|.
\]
Then for every \(x'\in \mathcal{E}_n\subset \mathcal{F}_n\),
\[
|R_{\beta,i}(x')|\le \frac{M_{i,n}}{\beta!}.
\]
Hence, since \(|\beta|=2\) and \(|x_j-k_j^*|\le r_n\) throughout \(\mathcal{E}_n\), we obtain
\[
\left|
\sum_{|\beta|=2}R_{\beta,i}(x')(x'-k^{\prime *})^\beta
\right|
\le
C_N\,M_{i,n}r_n^2,
\qquad x'\in \mathcal{E}_n,
\]
for some constant \(C_N>0\) depending only on \(N\).

Also, for every \(1\le i,j\le N-1\),
\[
\frac{\partial^2 \Phi_n}{\partial x_i\partial x_j}(x')
=
O\!\left(\frac{\log n}{n^{1/N}}\right)
\qquad \text{for all } x'\in \mathcal{E}_n,
\]
and throughout \(\mathcal{E}_n\) we have \(|x_j-k_j^*|\le r_n\). Therefore
\[
\sup_{x'\in \mathcal{E}_n}\left|\frac{\partial\Phi_n}{\partial x_i}(x')\right|
=
O\!\left(\frac{r_n\log n}{n^{1/N}}\right)
+
O(M_{i,n}r_n^2).
\]
Taking the maximum over \(1\le i\le N-1\), we obtain
\[
\sup_{x'\in \mathcal{E}_n}\|\nabla\Phi_n(x')\|
=
O\!\left(\frac{r_n\log n}{n^{1/N}}\right)
+
O(M_n r_n^2),
\]
where
\[
M_n:=\max_{1\le i\le N-1}M_{i,n}.
\]

Since \(r_n=o(n^{1/N})\) and \(k_j^*=n^{1/N}(1+o(1))\), every \(y'\in \mathcal{F}_n\) satisfies
\[
y_j=O(n^{1/N}),
\qquad
y_j^{-1}=O(n^{-1/N}),
\qquad 1\le j\le N-1.
\]
Moreover,
\[
\frac{n}{y_1\cdots y_{N-1}}=O(n^{1/N}),
\qquad
\log\!\left(\frac{n}{y_1\cdots y_{N-1}}\right)=O(\log n).
\]
By direct inspection of the explicit formulas for the derivatives
\[
D^\alpha\!\left(\frac{\partial\Phi_n}{\partial x_i}\right)(y'),
\qquad |\alpha|=2,
\]
it follows that
\[
M_{i,n}=O\!\left(\frac{\log n}{n^{2/N}}\right).
\]
Hence
\[
M_n=O\!\left(\frac{\log n}{n^{2/N}}\right).
\]

Since \(r_n=n^{1/(2N)}\log n\), we have
\[
\frac{r_n\log n}{n^{1/N}}
=
\frac{\log^2 n}{n^{1/(2N)}}
\to 0.
\]
Also,
\[
M_n r_n^2
=
O\!\left(\frac{\log n}{n^{2/N}}\right)\,O\!\left(n^{1/N}\log^2 n\right)
=
O\!\left(\frac{\log^3 n}{n^{1/N}}\right)
\to 0.
\]
Therefore
\[
\sup_{x'\in \mathcal{E}_n}\|\nabla\Phi_n(x')\|=o(1).
\]
Therefore, throughout \(\mathcal{E}_n\), the gradient \(\nabla\Phi_n\) is \(o(1)\). Hence moving by one discrete step in any coordinate direction changes the value of \(\Phi_n\) by only \(o(1)\). Thus the local discrete sum is a Riemann sum for the local integral:
\[
S_n^{\mathrm{loc}}
=
I_n^{\mathrm{loc}}\bigl(1+o(1)\bigr).
\]
Combining this with the negligibility of the remainder terms yields
\[
S_n\sim I_n.
\]
\end{proof}

\section{Illustrative examples}

As a simple illustration of Theorem~5, let
\[
\xi_1\sim \mathrm{Poisson}(1),\qquad
\xi_2\sim \mathrm{Poisson}(2),\qquad
\xi_3\sim \mathrm{Poisson}(3),
\]
where \(\xi_1,\xi_2,\xi_3\) are independent. In Figure~\ref{fig:poisson-product-example-N3}, we compare Monte Carlo values of
\[
-\log \mathcal{P}_n^{(3)}
\]
with the explicit part of the logarithmic asymptotic formula.
This comparison is only qualitative and is intended to illustrate the correct decay structure, rather than to provide a high-accuracy numerical approximation.

\begin{figure}[H]
\centering
\includegraphics[width=0.72\textwidth]{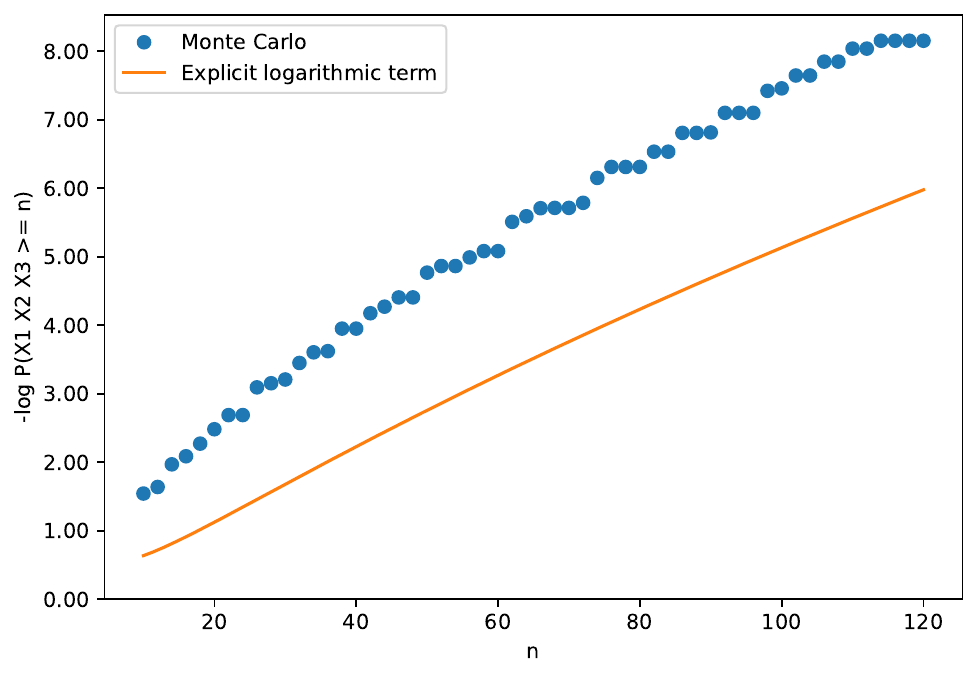}
\caption{Comparison of the Monte Carlo values of \(-\log \mathcal{P}_n^{(3)}\) with the explicit part of the logarithmic asymptotic formula for independent random variables \(\xi_1\sim \mathrm{Poisson}(1)\), \(\xi_2\sim \mathrm{Poisson}(2)\), and \(\xi_3\sim \mathrm{Poisson}(3)\). The figure illustrates the leading decay trend.}
\label{fig:poisson-product-example-N3}
\end{figure}

\end{document}